\def\P{\mathcal{P}}
\def\N{\mathbb N}
\def\R{\mathbb R}
\def\e{\varepsilon}
\def\vphi{\varphi}
\def\ds{\displaystyle}
\newcommand{\measurerestr}{%
  \,\raisebox{-.127ex}{\reflectbox{\rotatebox[origin=br]{-90}{$\lnot$}}}\,%
}
\DeclareMathAlphabet{\mathup}{OT1}{\familydefault}{m}{n}
\newcommand{\dx}[1]{\mathop{}\!\mathup{d} #1}
\newtheorem{theorem}{Theorem}
\newtheorem{remark}{Remark}[section]
\newtheorem{definition}{Definition}[section]
\newtheorem{proposition}[theorem]{Proposition}
\newtheorem{lemma}[theorem]{Lemma}
\numberwithin{equation}{section}
\numberwithin{figure}{section}
\author{J. A. Carrillo}
\author{M. G. Delgadino}
\author{G. A. Pavliotis}
\address{Department of Mathematics, Imperial College London, London SW7 2AZ}
\email{carrillo@imperial.ac.uk, m.delgadino@imperial.ac.uk, g.pavliotis@imperial.ac.uk}
\thanks{JAC and MGD were partially supported by the EPSRC through grant number EP/P031587/1. GAP was partially supported by the EPSRC through grant numbers EP/P031587/1, EP/L024926/1, and EP/L020564/1.}
\title{A proof of the mean-field limit for $\lambda$-convex potentials by $\Gamma$-Convergence}
\begin{document}
\maketitle
\begin{abstract}
In this work we give a proof of the mean-field limit for $\lambda$-convex potentials using a purely variational viewpoint. Our approach is based on the observation that all evolution equations that we study can be written as gradient flows of functionals at different levels: in the set of probability measures, in the set of symmetric probability measures on $N$ variables, and in the set of probability measures on probability measures. This basic fact allows us to rely on $\Gamma$-convergence tools for gradient flows to complete the proof by identifying the limits of the different terms in the Evolutionary Variational Inequalities (EVIs) associated to each gradient flow. The $\lambda$-convexity of the confining and interaction potentials is crucial for the unique identification of the limits and for deriving the EVIs at each description level of the interacting particle system.
\end{abstract}

\section{Introduction}
In this work we give an alternative proof of the mean field limit for interacting particle systems of the form
\begin{align}
dX^i_t = - \nabla V(X^i_t) \dx{t} -\frac{1}{ N} \sum\limits_{i \neq j}^N\nabla H(X^i_t -X^j_t) \dx{t} + \sqrt{2} \, dB^i_t \, ,
\label{eq:sdes}
\end{align}
where the stochastic processes $X^i_t$, $i\in\{1,\dots,N\}$ take values in a domain $\Omega\subset\R^d$ (that can be the entire $\R^d$), $B_t^i$, $i\in\{1,\dots,N\}$ denote standard one dimensional independent Brownian motions, the interaction potential $H:\R^ d\to\R$ is assumed to be bounded below, symmetric, with certain conditions at $\infty$ in case $\Omega$ is unbounded, and $\lambda$-convex, and the confinement potential  $V:\Omega\to \R$ is bounded below and $\lambda$-convex in $\Omega$.

Let us denote by $\mu^N$ the $N$-particle probability density, which is symmetric due to exchangeability of the particle system, and let us denote by $\mu^N_1$ any of its one particle marginals. The classical and well known mean-field limit result by Sznitman \cite{Szn} shows that interacting particle systems with globally Lipschitz and bounded interactions are determined by a nonlinear Fokker-Planck evolution equation for the limit of the first marginal $\mu^N_1$ as $N\to\infty$, usually referred as the McKean-Vlasov equation. In the particular case in which these interactions are derived from potentials as in \eqref{eq:sdes}, one can work with locally Lipschitz or singular interactions once the behavior of the potentials at infinity is under control, see \cite{Mal0,BGM,BCC,Bol,GQ,salem2018gradient} and the references therein for related results.

In fact, under the hypotheses on the confining and interaction potentials in the first paragraph, the gradient flow approach developed in \cite{Villani,ambrosio2008gradient} can be used to show that the Cauchy problem for the formal mean-field limit of \eqref{eq:sdes}, given by the nonlinear McKean-Vlasov equation
\begin{equation}\label{mve}
\partial_t \rho+\nabla\cdot((\nabla V +\nabla H *\rho) \rho)=\Delta \rho,
\end{equation}
for $x \in \Omega$ and with no-flux boundary conditions on $\partial \Omega$, is well-posed in $\mathcal{P}_2(\Omega)$, the set of probability measures with bounded second moment in $\Omega$. Therefore, it is expected that the mean-field limit should hold in this setting, that is different from the classical setting of Sznitman \cite{Szn}.

Our strategy is to derive evolutions of gradient flows at three different levels: the first one at the level of the formal mean-field limit McKean-Vlasov equation \eqref{mve} just mentioned, the second one at the level of the $N$-particle probability density $\mu^N$ in the set of symmetric probability measures in the product space $\mathcal{P}_{sym}(\Omega^N)$, and finally the third one at the level of probability measures on $\mathcal{P}(\Omega)$, denoted by $\mathcal{P}(\mathcal{P}(\Omega))$; naturally, the empirical measure associated to \eqref{eq:sdes} is an element of this space. In all these spaces, we assume the equivalent growth condition to second bounded moments as for \eqref{mve} but we avoid the subscript $2$ for notational simplicity. We show that we can naturally construct these evolutions based on gradient flows using the $\lambda-$convexity of the confining and interaction potentials and that we can relate them by taking the limit $N\to\infty$ in a suitable manner.
To be more precise, given $X\in \mathcal{P}(\mathcal{P}(\Omega))$, we define 
\begin{equation*}
X^N=\int_{\P(\Omega)}\rho^{\otimes N}\;dX(\rho)\in \P_{sym}(\Omega^N)\,,
\end{equation*}
by duality as
\begin{equation}\label{aux}
\langle\phi, X^N\rangle_{C_b(\Omega^N),\mathcal{P}_{sym}(\Omega^N) }=\int_{\mathcal{P}(R)}\left(\int_{\Omega^N}\phi(x)\,d\rho^{\otimes N}(x) \right)\,dX(\rho)\,,
\end{equation}
for any $\phi\in C_b(\Omega^N)$, where $\rho^{\otimes N}$ represents the tensor product
$$
d\rho^{\otimes N}(x)=d\rho(x_1)d\rho(x_2)...d\rho(x_N).
$$
Then we can define rigorously our notion of convergence relating the sequence of $N$-particle
probability densities $\mu^N$ to objects living in $\mathcal{P}(\mathcal{P}(\Omega))$.

\begin{definition}\label{def:convergence1}
Given a sequence $\{\mu^N\}_{N\in \N}$, such that $\mu^N\in \P_{sym}(\Omega^N)$ for every $N\in \N$, and $X\in \P(\P(\Omega))$, we say that $\mu^N\to X$, if 
\begin{equation*}
\lim_{N\to \infty} \frac{1}{N}d_2^2(\mu^N, X^N)=0\, ,
\end{equation*}
where $d_2(\cdot, \cdot)$ denotes the $2-$Wasserstein distance.
\end{definition}
This notion of convergence was studied in \cite{hauray2014kac} and it implies the convergence of the one-particle marginal distributions towards a limiting density. Our main result can be summarized as follows.

\begin{theorem}\label{main result}
Given $\lambda\in \R$. We assume that $V:\Omega\to \R$ is bounded below and $\lambda$-convex in $\Omega$, and that $H:\R^ d\to\R$ is bounded below, symmetric, $\lambda$-convex and satisfies the doubling condition,
\begin{equation}\label{eq:doublinghyp}
\exists\, C>0:\;H(x+y)\le C(1+H(x)+H(y))\quad\forall x,\,y\in \Omega.    
\end{equation}
Given $\{\mu^N_0\}_{N\in \N}$ and $X_0\in \mathcal{P}(\mathcal{P}(\Omega))$, such that $\mu^N_0\in \P_{sym}(\Omega^N)$, $\mu^N_0\to X_0$ in the sense of Definition~\ref{def:convergence1}, and
\begin{equation}\label{eq:bounded energy}
\sup_{N\in \N}\frac{1}{N}\mathcal{F}^N[\mu^N_0]=\sup_{N\in \N} \left\{\frac{1}{N}\int_{\Omega^N} W^N(x) d\mu^N_0(x)\;dx+ \frac{1}{N}\int_{\Omega^N} \mu^N_0 \log(\mu^N_0)\;dx\right\}<\infty,
\end{equation}
where
\begin{equation*}
W^N=\sum_{i=1}^N V(x_i)+\frac{1}{2N}\sum_{i \ne j} H(x_i-x_j).
\end{equation*}
We consider $\mu^N:[0,\infty)\to \P_{sym}(\Omega^N)$ the unique gradient flow of $\mathcal{F}^N$ with initial condition $\mu^N_0$ under the $d_2$ metric. Then, for any $T>0$ we have 
\begin{equation*}
\lim_{N\to\infty}\sup_{t\in[0,T]}\frac{1}{N}d_2^2(\mu^N(t),X^N(t))=0,
\end{equation*}
where 
\begin{equation*}
X^N(t)=\int_{\P(\Omega)} (S_t\rho)^{\otimes N}\;dX_0(\rho),
\end{equation*}
and $S_t:\P(\Omega)\to \P(\Omega)$ is the (nonlinear) semigroup generated by the associated McKean-Vlasov-Fokker-Planck equation \eqref{mve}.
In particular, under the hypothesis of initial propagation of chaos $\mu^N_0\to \delta_{\rho_0}$, then we have the propagation of chaos uniformly over $t \in [0,T]$, and the mean-field limit holds: 
\begin{equation*}
\lim_{N\to\infty}\sup_{t\in[0,T]}\frac{1}{N}d_2^2(\mu^N(t),(S^t\rho_0)^{\otimes N})=0,
\end{equation*}
for every $T>0$, and consequently $\mu^N (t)\to \delta_{S^t\rho_0}$ and
$$
\lim_{N\to\infty} d_2^2(\mu_1^N(t),S^t\rho_0)=0,
$$
for all $t>0$, where $\mu_1^N$ is the first marginal of $\mu^N$.
\end{theorem}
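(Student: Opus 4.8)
The plan is to realise each of the three evolutions as an Evolutionary Variational Inequality (EVI) gradient flow and to pass to the limit in the EVI for the $N$-particle flow, using the $\Gamma$-convergence of the energies to identify every term in the limit.

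\emph{Step 1: the three gradient flows.} I would introduce the McKean--Vlasov free energy
\[
\E[\rho]=\int_\Om V\,d\rho+\tfrac12\iint_{\Om\times\Om}H(x-y)\,d\rho(x)\,d\rho(y)+\int_\Om\rho\log\rho
\]
on $(\P(\Om),d_2)$, the normalised $N$-particle energy $\tfrac1N\F^N$ on $(\P_{sym}(\Om^N),\tfrac1{\sqrt N}d_2)$, and the lifted energy $\widehat{\E}[X]=\int_{\P(\Om)}\E[\rho]\,dX(\rho)$ on $(\P(\P(\Om)),\mathcal W_2)$, $\mathcal W_2$ being the $2$-Wasserstein distance built over $(\P(\Om),d_2)$. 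Since $V,H$ are bounded below and $\lambda$-convex and $H$ satisfies \eqref{eq:doublinghyp}, the Ambrosio--Gigli--Savar\'e theory shows all three functionals are proper, lower semicontinuous and geodesically $\lambda'$-convex for one constant $\lambda'=\lambda'(\lambda)$ \emph{independent of $N$} (the Hessian of $W^N$ is bounded below by a multiple of $\lambda$ uniformly in $N$ after the $\tfrac1N$ rescaling, and the doubling condition keeps the interaction finite and convex along generalised geodesics); each therefore has a unique EVI$_{\lambda'}$ gradient flow, along which the energy is non-increasing. The flow of $\tfrac1N\F^N$ is the law $\mu^N(t)$ of \eqref{eq:sdes}, that of $\E$ is $S_t$, and -- by the transfer principle for EVI flows, obtained by differentiating $\mathcal W_2^2((S_t)_\#X_0,Y)$ along the coupling optimal at time $t$ and applying the base EVI fibrewise -- the flow of $\widehat{\E}$ from $X_0$ is exactly $X(t)=(S_t)_\#X_0$, so $X^N(t)=\iota_N(X(t))$ with $\iota_N\colon X\mapsto X^N$ the embedding of \eqref{aux}.

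\emph{Step 2: $\Gamma$-convergence and compatibility of the metrics.} Next I would establish that $\tfrac1N\F^N$ $\Gamma$-converges to $\widehat{\E}$ along Definition~\ref{def:convergence1}. The recovery sequence for $X$ is $X^N=\iota_N(X)$, and a direct computation on \eqref{aux} gives $\tfrac1N\F^N[X^N]=\widehat{\E}[X]-\tfrac1{2N}\int_{\P(\Om)}\iint H(x-y)\,d\rho(x)\,d\rho(y)\,dX(\rho)+\big(\tfrac1N\mathrm{Ent}(X^N)-\int_{\P(\Om)}\mathrm{Ent}(\rho)\,dX(\rho)\big)$; the first correction is $O(1/N)$ and the second, the specific-entropy defect, is nonpositive by convexity of the entropy and tends to $0$ by de Finetti-type asymptotics, so $\tfrac1N\F^N[X^N]\to\widehat{\E}[X]$. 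For the $\Gamma$-$\liminf$, if $\mu^N\to X$ in the sense of Definition~\ref{def:convergence1} then the laws of the empirical measures converge narrowly to $X$, and $\liminf_N\tfrac1N\F^N[\mu^N]\ge\widehat{\E}[X]$ follows from lower semicontinuity of $\rho\mapsto\int V\,d\rho+\tfrac12\iint H\,d\rho\,d\rho$ on $\P_2(\Om)$ (using \eqref{eq:doublinghyp} and $V,H$ bounded below) together with lower semicontinuity of the \emph{mean} entropy $\tfrac1N\int\mu^N\log\mu^N$ under narrow convergence of empirical measures. I would also record the metric facts of \cite{hauray2014kac}: Definition~\ref{def:convergence1}-convergence implies narrow convergence of $(E_N)_\#\mu^N$ and of all marginals; the empirical-measure map is $\tfrac1{\sqrt N}$-Lipschitz and $\iota_N$ satisfies $\tfrac1Nd_2^2(\iota_NX,\iota_NY)\le\mathcal W_2^2(X,Y)$ (lift an optimal plan by product couplings, via $d_2^2(\rho^{\otimes N},\sigma^{\otimes N})\le N\,d_2^2(\rho,\sigma)$); and $d_2^2(\mu^N_1,\nu)\le\tfrac1N d_2^2(\mu^N,\nu^{\otimes N})$.

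\emph{Step 3: limit of the EVI and conclusion.} For fixed $s\ge0$ I would test the EVI for $\mu^N$ against the frozen reference $X^N(s)$,
\[
\tfrac1{2N}\tfrac{d}{dt}d_2^2(\mu^N(t),X^N(s))+\tfrac{\lambda'}{2N}d_2^2(\mu^N(t),X^N(s))+\tfrac1N\F^N[\mu^N(t)]\le\tfrac1N\F^N[X^N(s)],
\]
integrate in $t$ over $[t_1,t_2]$, and send $N\to\infty$. The bounded-energy hypothesis \eqref{eq:bounded energy} and the non-increase of $\tfrac1N\F^N$ give, by standard a priori estimates, uniform tightness and $\tfrac1N\int_0^T|\dot\mu^N|^2\,dt\le C$ uniformly in $N$; since the empirical-measure map is $\tfrac1{\sqrt N}$-Lipschitz, $(E_N)_\#\mu^N(\cdot)$ is uniformly $\tfrac12$-H\"older in $\mathcal W_2$, so along a subsequence it converges locally uniformly to a curve $Y(\cdot)$ in $\P(\P(\Om))$. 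Step~2 then identifies each term of the integrated inequality -- $\tfrac1N d_2^2(\mu^N(t),X^N(s))\to\mathcal W_2^2(Y(t),X(s))$ via the recovery-sequence construction, $\liminf\tfrac1N\F^N[\mu^N(t)]\ge\widehat{\E}[Y(t)]$, and $\tfrac1N\F^N[X^N(s)]\to\widehat{\E}[X(s)]$ -- so $Y$ satisfies the integrated EVI$_{\lambda'}$ for $\widehat{\E}$ with reference $X(s)$ for every $s$, and $Y(0)=X_0$. Uniqueness of EVI$_{\lambda'}$ flows gives $Y\equiv X$, and independence of the subsequence upgrades this to convergence of the full sequence. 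Finally the energies converge, $\tfrac1N\F^N[\mu^N(t)]\to\widehat{\E}[X(t)]$ (the missing $\limsup$ from the limiting EVI/energy-dissipation comparison), which promotes narrow convergence of the empirical measures, hence of the joint laws, to $\sup_{t\le T}\tfrac1Nd_2^2(\mu^N(t),X^N(t))\to0$, uniformity in $t$ coming from the uniform-in-$t$ level-$3$ convergence and uniform moment control. The propagation-of-chaos corollary is then immediate: $X_0=\delta_{\rho_0}$ gives $X^N(t)=(S_t\rho_0)^{\otimes N}$ and $d_2^2(\mu^N_1(t),S_t\rho_0)\le\tfrac1Nd_2^2(\mu^N(t),(S_t\rho_0)^{\otimes N})\to0$.

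\emph{Main obstacle.} The crux is the $\Gamma$-$\liminf$ for the entropy -- lower semicontinuity of the mean entropy $\tfrac1N\int\mu^N\log\mu^N$ under narrow convergence of empirical measures, where one genuinely exploits the de Finetti structure and where the doubling hypothesis keeps the interaction term under control -- together with, and inseparably from it, the passage to the limit in the Wasserstein terms of the EVI, i.e.\ establishing $\tfrac1N d_2^2(\mu^N(t),X^N(s))\to\mathcal W_2^2(Y(t),X(s))$, which is precisely what the recovery sequences of Step~2 and the notion of convergence of Definition~\ref{def:convergence1} are designed to deliver. The $\lambda$-convexity enters essentially and twice: it upgrades the gradient flows to EVI flows, providing the uniqueness that closes the argument, and it makes the convexity constant uniform across the three levels, so that the limiting EVI is the correct one.
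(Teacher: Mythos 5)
Your proposal is correct and follows essentially the same route as the paper: realise the dynamics as EVI gradient flows at the particle level and on $\mathcal{P}(\mathcal{P}(\Omega))$, use the bounded-energy/EDE bound to get compactness of the curves via the scaled isometry with the empirical measures, pass to the limit in the integrated EVI using the convergence of the metric and the $\Gamma$-convergence of the free energies (recovery sequence by tensorisation plus Jensen, $\liminf$ via subadditivity of the entropy), and conclude by uniqueness of the limiting EVI flow, which is identified with $(S_t)_{\#}X_0$ by pushing forward optimal couplings fibrewise. The only cosmetic deviations are that the paper keeps the explicit constant $\min(3\lambda,0)$ for the $N$-particle convexity rather than a generic $\lambda'$, and uses the doubling condition only for well-posedness of the McKean--Vlasov flow.
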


\begin{remark}\
\begin{itemize}
\item The hypothesis of bounded energy \eqref{eq:bounded energy} as $N\to\infty$ in Theorem \ref{main result} is weaker than the well-preparedness for the initial data
$$
\lim_{N\to\infty}\frac{\mathcal{F}^N[\mu_N^0]}{N}=\mathcal{F}^\infty[X_0].
$$

\item The doubling hypothesis for $H$ \eqref{eq:doublinghyp} is only used in Lemma~\ref{lem:uniquegradpp}, which shows the well-posedness of the McKean-Vlasov-Fokker-Planck equation.

\item Our assumptions on the confining and interaction potentials $V$, include double well potentials such as $(1-|x|^2)^2$. For example, our results apply to the Desai-Zwanzig model~\cite{Dawson1983}. Note that $\lambda$-convexity of the potentials imply that in terms of regularity both potentials are at least locally Lipschitz. 
\end{itemize}
\end{remark}

We now comment on the relation between this work and other works on mean field limits for interacting diffusions. In addition to the already cited works on gradient flows, this paper is motivated by~\cite{messer1982statistical} in which a variational approach was adopted for the study of the mean field limit of the free energy functional for classical point particles in a box; see also~\cite{Kiessling1993} and more recent work on evolutionary Gamma convergence~\cite{serfaty2011}. In particular, our goal is to provide a complete, self-contained proof of a propagation of chaos result that relies only on analytical and variational arguments, in contrast to, e.g. probabilistic/martingale techniques~\cite{oelschlager1984}. We also mention an alternative approach based on coupling arguments~\cite{DEGZ2018} that also leads to a short, self-contained proof of uniform in time propagation of chaos results, see also related results on uniform in time propagation of chaos in \cite{salem2018gradient} for systems of weakly interacting diffusions. It should be mentioned, however, that the class of drifts for which the results in~\cite{DEGZ2018,salem2018gradient} are applicable, is broader to the $\lambda-$convex potentials that are covered by the techniques that are used in the present paper.

The rest of the paper is organized as follows. In Section~\ref{sec:prelim} we introduce several notations and transport distances at the different levels of description of the $N$-particle system. In Setion~\ref{sec:exploiting convexity} we exploit the $\lambda$-convexity to show convexity of the corresponding free energy at the $N$-particle symmetric probability density level. In Section~\ref{sec:metricconvergence} we summarize the characterization of the notion of convergence in Definition~\ref{def:convergence1}, together with compactness properties of curves in $\P_{sym}(\Omega^N)$. Section~\ref{sec:gammaconvergence} is devoted to the proof of the $\Gamma$-convergence of the involved functionals as $N\to\infty$ to the corresponding free energy defined on $\mathcal{P}(\mathcal{P}(\Omega))$. Finally, in Section~\ref{sec:uniquemain} we utilize the gradient flow theory on $\P_{sym}(\Omega^N)$ to define the corresponding evolution semigroups characterized by their Evolutionary Variational Inequalities leading to the passing to the limit as $N\to\infty$ in the EVIs and our main result. The identification of the limit uses again crucially the classical gradient flow theory in $\P_2(\Omega)$ for the McKean-Vlasov-Fokker-Planck equation \eqref{mve}.


\section{Preliminaries}\label{sec:prelim}
\subsection{Notation and Preliminary results}
Let us start by setting up a similar framework to Rougerie~\cite[Chapter 1]{rougerie2015finetti}. Given $\Omega\subset \R^d$ and $N\in \N$, the set $\Omega^N\subset \R^{dN}$ is given by the product of $N$ copies of $\Omega$. We say that a probability measure $\mu^N\in \mathcal{P}(\Omega^N)$ is symmetric, denoted by $\mu\in \mathcal{P}_{sym}(\Omega^N)$, if $\sigma_{\#}\mu^N=\mu^N$ for any permutation $\sigma$ of the N variables. In the literature, this property is referred as exchangeability. The n-th marginal, denoted by $\mu^N_n\in \mathcal{P}_{sym}(\Omega^n)$, is characterized by duality: 
\begin{equation}\label{def:marginal}
    \langle\psi,\mu^N_n\rangle_{C_b(\Omega^n),\mathcal{P}_{sym}(\Omega^n) } =\int_{\Omega^{N}}\psi(x_1,...,x_n)\;d\mu^N(x)\qquad\mbox{for any $\psi\in C_b (\Omega^n)$}.
\end{equation}
We note that by symmetry the marginal is independent of the variables we evaluate $\psi$ on.

We consider $T^N:\Omega^N\to \mathcal{K}^N\subset\mathcal{P}(\Omega^N)$ the map given by
\begin{equation*}
T^N(x_1,...,x_N)=\frac{1}{N}\sum_{i=1}^N\delta_{x_i},
\end{equation*}
where $\mathcal{K}^N$ is the set of probability measures given by the average of $N$ Dirac measures and coincides with the image of $T^N$.  We define the empirical measure associated to $\mu^N\in \mathcal{P}_{sym}(\Omega^N)$ as the image measure through $T^N$, i.e.
\begin{equation*}
\hat{\mu}^N=T^N_\#\mu^N\in \mathcal{P}(\mathcal{P}(\Omega)).
\end{equation*}

Note that taking $X$ as $\hat{\mu}^N\in \mathcal{P}(\mathcal{P}(\Omega))$ in \eqref{aux}, then $(\hat{\mu}^N)^n\in \mathcal{P}_{sym}(\Omega^n)$ is given by
\begin{equation*}
(\hat{\mu}^N)^n=\int_{\P(\Omega)}\rho^{\otimes n}\;d\hat{\mu}^N(\rho) = \int_{\Omega^N}\left(\frac{1}{N}\sum_{i=1}^N\delta_{x_i}\right)^{\otimes n}\;d\mu^N(x).
\end{equation*}

Using the previous notation we have the following result.
\begin{lemma}[Diaconis-Freedman \cite{diaconis1980finite}]\label{lem:DiaconisFreedman}
Given $n<N$ we have the following estimate for the total variation norm
\begin{equation*}
\|\mu^N_n-(\hat{\mu}^N)^n\|_{TV}\le 2\frac{n(n-1)}{N}.
\end{equation*}
\end{lemma}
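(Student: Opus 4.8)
The plan is to compute $(\hat\mu^N)^n$ explicitly from the formula just derived and to compare it term by term with the marginal $\mu^N_n$. Expanding the $n$-fold tensor power of $\frac1N\sum_i\delta_{x_i}$ gives
\begin{equation*}
(\hat{\mu}^N)^n=\frac{1}{N^n}\sum_{(i_1,\dots,i_n)\in\{1,\dots,N\}^n}\int_{\Omega^N}\delta_{x_{i_1}}\otimes\cdots\otimes\delta_{x_{i_n}}\;d\mu^N(x).
\end{equation*}
I would then split the sum according to whether the multi-index $(i_1,\dots,i_n)$ has pairwise distinct entries or not. When the entries are all distinct (possible since $n<N$), exchangeability of $\mu^N$ together with the defining relation \eqref{def:marginal} shows that $\int_{\Omega^N}\delta_{x_{i_1}}\otimes\cdots\otimes\delta_{x_{i_n}}\,d\mu^N=\mu^N_n$, and the number of such multi-indices is exactly $(N)_n:=N(N-1)\cdots(N-n+1)=N^n\prod_{k=0}^{n-1}(1-\tfrac kN)$. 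When some entry is repeated, the corresponding integral is still some probability measure $\nu_{i_1,\dots,i_n}\in\P(\Omega^n)$, and there are $N^n-(N)_n$ such multi-indices. Hence
\begin{equation*}
(\hat{\mu}^N)^n-\mu^N_n=\Big(\frac{(N)_n}{N^n}-1\Big)\mu^N_n+\frac{1}{N^n}\sum_{(i_1,\dots,i_n)\ \text{not all distinct}}\nu_{i_1,\dots,i_n}.
\end{equation*}

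Next I would take total variation norms, use the triangle inequality together with $\|\mu^N_n\|_{TV}=\|\nu_{i_1,\dots,i_n}\|_{TV}=1$, and count the non-distinct multi-indices, to get
\begin{equation*}
\|(\hat{\mu}^N)^n-\mu^N_n\|_{TV}\le\Big(1-\frac{(N)_n}{N^n}\Big)+\frac{N^n-(N)_n}{N^n}=2\Big(1-\frac{(N)_n}{N^n}\Big).
\end{equation*}
Finally I would estimate the right-hand side by the elementary inequality $\prod_{k=0}^{n-1}\big(1-\tfrac kN\big)\ge 1-\sum_{k=0}^{n-1}\tfrac kN=1-\tfrac{n(n-1)}{2N}$, valid since each factor lies in $[0,1]$, which yields $1-\frac{(N)_n}{N^n}\le\frac{n(n-1)}{2N}$ and therefore $\|(\hat{\mu}^N)^n-\mu^N_n\|_{TV}\le\frac{n(n-1)}{N}\le 2\frac{n(n-1)}{N}$.

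There is no real obstacle here: the argument is elementary combinatorics, and the only conceptual point — which is precisely the Diaconis–Freedman observation — is that $(\hat\mu^N)^n$ and $\mu^N_n$ describe drawing $n$ samples from a $\mu^N$-distributed configuration \emph{with} and \emph{without} replacement, respectively, so they can differ only through the probability $1-(N)_n/N^n=O(n^2/N)$ of ever drawing a repeated index; equivalently, one may run the same estimate through the coupling characterization of the total variation distance. The only thing to watch is the factor $2$ from the normalization convention for $\|\cdot\|_{TV}$, and I would simply keep the stated (weaker) constant $2\,n(n-1)/N$.
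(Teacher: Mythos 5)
Your proposal is correct and follows essentially the same route as the paper: both decompose $(\hat\mu^N)^n$ into the contribution of multi-indices with pairwise distinct entries (which, by exchangeability, reproduces $\mu^N_n$ with weight $(N)_n/N^n$) plus a nonnegative remainder of mass $1-(N)_n/N^n$, and then bound the total variation by twice that defect. Your explicit combinatorial bookkeeping and the inequality $\prod_{k=0}^{n-1}(1-k/N)\ge 1-\tfrac{n(n-1)}{2N}$ in fact yield the slightly sharper constant $n(n-1)/N$, which of course implies the stated bound.
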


For completeness, we provide a simple proof of this result.

\begin{proof}
Using the definition of the map $(\hat{\mu}^N)^n$, we have
\begin{equation*}
(\hat{\mu}^N)^n=\int_{\Omega^N}\left(\frac{1}{N^N}\sum_{\gamma\in \Gamma_N} \delta_{z_\gamma}\right)^{\otimes n}\;d\mu(z),
\end{equation*}
where $\Gamma_N$ is the set of maps from $\{1,...,N\}$ onto itself. Whilst we can rewrite
\begin{equation*}
\mu^N_n=\int_{\Omega^N}\left(\frac{1}{N!}\sum_{\sigma\in \Sigma_N} \delta_{z_\sigma}\right)^{\otimes n}\;d\mu(z),
\end{equation*}
where $\Sigma_N$ is the set of permutations of $\{1,...,N\}$. Now, counting the number of maps leaving invariant $N-n$ variables up to symmetries, we can compute that
\begin{equation*}
(\hat{\mu}^N)^n= \frac{N!}{n!N^n}\mu^N_n+\nu_n,
\end{equation*}
where $\nu_n$ is a positive measure on $\P_{sym}(\Omega^n)$. Hence,
\begin{equation*}
\int_{\Omega^n}\nu_n=\left(1-\frac{N!}{n!N^n}\right),
\end{equation*}
which implies that
\begin{equation*}
\int_{\Omega^n}|(\hat{\mu}^N)^n-\mu^N_n|\le 2\left(1-\frac{N!}{n!N^n}\right).
\end{equation*}
The estimate follows by noticing that
\begin{equation*}
1-\frac{N!}{n!N^n}\le \frac{n(n-1)}{N}.
\end{equation*}
\end{proof}

It will be useful to be able to easily distinguish between two members of $\mathcal{P}(\mathcal{P}(\Omega))$, just by looking at the symmetric measures they induce, see Eqn.~ \eqref{aux}.
\begin{lemma}[\cite{lions2007mean}]\label{lem:moments}
Let $X$ and $Y\in \P(\P(\Omega))$, then $X=Y$ if and only if for every $n\in\N$
\begin{equation}\label{eq:equalmarginals}
\int_{\P(\Omega)}\rho^{\otimes n}\;dX(\rho)=\int_{\P(\Omega)}\rho^{\otimes n}\;dY(\rho).
\end{equation}
\end{lemma}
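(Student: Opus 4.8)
The implication \emph{$X=Y\ \Rightarrow$ \eqref{eq:equalmarginals}} is trivial, so the plan is to show that equality of all the tensor-moment measures in \eqref{eq:equalmarginals} forces $X=Y$. The first step is to rephrase the hypothesis in terms of scalar observables. Each measure $\int_{\P(\Omega)}\rho^{\otimes n}\,dX(\rho)\in\P_{sym}(\Omega^n)$ is a finite Borel measure on the Polish space $\Omega^n$, and integrating it against a product test function $\phi_1\otimes\cdots\otimes\phi_n$ with $\phi_i\in C_b(\Omega)$ gives, by \eqref{aux}, the quantity $\int_{\P(\Omega)}\prod_{i=1}^n\langle\phi_i,\rho\rangle\,dX(\rho)$. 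Since the products $\phi_1\otimes\cdots\otimes\phi_n$ form a multiplicative family generating $\mathcal{B}(\Omega^n)$, two finite measures on $\Omega^n$ with the same total mass that integrate all of them equally must coincide; hence \eqref{eq:equalmarginals} is equivalent to
\begin{equation*}
\int_{\P(\Omega)}\prod_{i=1}^n\langle\phi_i,\rho\rangle\,dX(\rho)=\int_{\P(\Omega)}\prod_{i=1}^n\langle\phi_i,\rho\rangle\,dY(\rho)
\end{equation*}
holding for every $n\in\N$ and every $\phi_1,\dots,\phi_n\in C_b(\Omega)$, repetitions among the $\phi_i$ being allowed.

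Next I would fix $k\in\N$ and $\phi_1,\dots,\phi_k\in C_b(\Omega)$ and consider the map $\Phi\colon\P(\Omega)\to\R^k$, $\Phi(\rho)=(\langle\phi_1,\rho\rangle,\dots,\langle\phi_k,\rho\rangle)$, which is continuous for the weak topology, hence Borel. Because the $\phi_i$ are bounded, $\Phi$ takes values in the compact box $Q=\prod_{i=1}^k[-\|\phi_i\|_\infty,\|\phi_i\|_\infty]$, so $\Phi_\#X$ and $\Phi_\#Y$ are probability measures on $Q$. Choosing in the displayed identity above the function $\phi_i$ with multiplicity $\alpha_i$ shows that $\Phi_\#X$ and $\Phi_\#Y$ have the same moments, i.e. $\int_Q t^\alpha\,d(\Phi_\#X)=\int_Q t^\alpha\,d(\Phi_\#Y)$ for every multi-index $\alpha\in(\N\cup\{0\})^k$. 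By the Weierstrass theorem polynomials are dense in $C(Q)$, and a finite Borel measure on the compact set $Q$ is determined by its integrals against $C(Q)$; therefore $\Phi_\#X=\Phi_\#Y$. In other words, $X$ and $Y$ have the same finite-dimensional distributions relative to the family of evaluation maps $\rho\mapsto\langle\phi,\rho\rangle$, $\phi\in C_b(\Omega)$.

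Finally I would promote this to $X=Y$. The cylinder sets $\{\rho\in\P(\Omega):(\langle\phi_1,\rho\rangle,\dots,\langle\phi_k,\rho\rangle)\in B\}$, with $k\in\N$, $\phi_i\in C_b(\Omega)$ and $B\in\mathcal{B}(\R^k)$, form a $\pi$-system, since the intersection of two such sets is again of this form, obtained by concatenating the two lists of test functions. Moreover this $\pi$-system generates the Borel $\sigma$-algebra of $\P(\Omega)$, because the maps $\rho\mapsto\langle\phi,\rho\rangle$ define the (Polish, second countable) weak topology on $\P(\Omega)$. By the previous step $X$ and $Y$ agree on this $\pi$-system, so Dynkin's $\pi$--$\lambda$ theorem yields $X=Y$.

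None of this is technically deep, and the argument should go through verbatim. The only genuinely essential input is the fact used in the second step that a compactly supported (equivalently, bounded) probability measure on $\R^k$ is uniquely determined by its moments; the reduction to finite-dimensional projections is precisely what is needed, since $\P(\Omega)$ is not compact and a direct Stone--Weierstrass argument on $\P(\Omega)$ is unavailable. It is worth noting that the compact support of $\Phi_\#X$ comes for free from the boundedness of the test functions $\phi_i\in C_b(\Omega)$, so no moment or tightness hypotheses on the elements of $\P(\P(\Omega))$ enter, consistent with the stated generality of the lemma.
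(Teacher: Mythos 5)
Your proof is correct, and it takes a genuinely different route from the paper's. The paper works directly on $\P(\P(\Omega))$: it introduces the algebra of monomials $M_{k,\vphi}(\rho)=\int_{\Omega^k}\vphi\,d\rho^{\otimes k}$, observes via Fubini that $X$ and $Y$ integrate every monomial equally, and then invokes ``the general version of the Stone--Weierstrass theorem'' to assert that this algebra is dense in $C_b(\P(\Omega))$, whence $X=Y$. That last step is the delicate point of the paper's argument: uniform density of a subalgebra in $C_b$ of a non-compact Polish space is not what Stone--Weierstrass gives, so the paper's proof really rests on the weaker (and true) fact that a point-separating algebra of bounded continuous functions is measure-determining. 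Your argument replaces this global density claim by a reduction to finite dimensions: you push $X$ and $Y$ forward under $\Phi(\rho)=(\langle\phi_1,\rho\rangle,\dots,\langle\phi_k,\rho\rangle)$, land in a compact box where the moment problem is determinate by ordinary Weierstrass approximation, conclude equality of all finite-dimensional distributions, and finish with Dynkin's $\pi$--$\lambda$ theorem on the cylinder $\pi$-system, which generates the Borel $\sigma$-algebra because the evaluation maps generate the (second countable) narrow topology. The two proofs share the same essential input -- that the tensor moments are equivalent to the scalar moments $\int\prod_i\langle\phi_i,\rho\rangle\,dX(\rho)$ -- but your finite-dimensional projection argument is more elementary and avoids the compactness issue entirely, at the cost of the extra bookkeeping with $\pi$-systems; it is, if anything, a more robust justification of the same lemma.
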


\begin{proof}
We prove this Lemma by duality with bounded continuous functions $C_b(\P(\Omega))$. We consider the algebra of functionals $\{M_{k,\vphi}\}_{k\in \N,\, \vphi\in C_b(\Omega^k)}\subset C_b(\P(\Omega))$, defined by
\begin{equation*}
M_{k,\vphi}(\rho)=\int_{\Omega^k}\vphi \;d\rho^{\otimes k}.
\end{equation*}
By \eqref{eq:equalmarginals} and Fubini's theorem, we have that for any monomial $M_{k,\vphi}$,
\begin{equation}\label{eq:equalmonomials}
\begin{array}{rcl}
\ds\int_{\Omega^k}\vphi\;d\left(\int_{\P(\Omega)}\rho^{\otimes k}\;dX(\rho)\right)&=&\ds\int_{\Omega^k}\vphi\;d\left(\int_{\P(\Omega)}\rho^{\otimes k}\;dY(\rho)\right),\\
\ds\int_{\P(\Omega)} \left(\int_{\Omega^k}\vphi\;d\rho^{\otimes k}\right)\;dX(\rho)&=&\ds\int_{\P(\Omega)} \left(\int_{\Omega^k}\vphi\;d\rho^{\otimes k}\right)\;dY(\rho),\\
\ds\langle M_{k,\vphi}, X\rangle_{C_b(\P(\Omega)),\P(\P(\Omega))}&=&\langle M_{k,\vphi},Y\rangle_{C_b(\P(\Omega)),\P(\P(\Omega))}.
\end{array}
\end{equation}
By the general version of the Stone-Weierstrass Theorem, we have that the algebra of monomial functions on $\P(\Omega)$ is dense $C_b(\P(\Omega))$. Therefore, by the density of the monomials and \eqref{eq:equalmonomials}, we have that $X=Y$.
\end{proof}

\subsection{The Wasserstein distance and narrow convergence}
In the sequel, we need to consider the 2-Wasserstein distance in the space of probability measures defined over probability measures. Therefore, it is appropriate to give the definition of the 2-Wasserstein distance and state its properties for general complete separable metric spaces. This framework can be found in \cite[Chap. 7]{Villani} and \cite[Chap. 2]{MR3050280}, where a more detailed exposition and proofs can be found. 

Let $(S,D)$ be a Polish space, i.e. a complete, separable metric space. We denote by $P(S)$ the space of probability measures defined on $S$. We start by recalling the notion of narrow convergence. Given a sequence $\{\mu_{n}\}_{n\in\N}\subset \mathcal{P}(S)$, it narrowly converges to $\mu_{\infty}$, denoted by
$$
\mu_{n}\rightharpoonup\mu_{\infty},
$$
if
$$
\lim_{n\to\infty} \int_{S}f(x)\;d\mu_n(x)=\int_{S}f(x)\;d\mu_{\infty}(x)\qquad\mbox{for any$f\in C_b(S)$.}
$$
We also recall a standard application of Prohorov's theorem:
\begin{theorem}
Given a sequence $\{\mu_n\}_{n\in\N}\subset \mathcal{P}(S)$, assume that 
$$
\sup_{n\in \N} \int_{S} d^2(x,x_0)\;d\mu_n(x)<\infty\qquad\mbox{for some $x_0\in S$}.
$$
Then
$$
\{\mu_n\}_{n\in\N}\; \mbox{is relatively compact.}
$$
\end{theorem}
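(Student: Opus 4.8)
The plan is to obtain this from Prohorov's theorem, which in this Polish setting states that a subset of $\P(S)$ is relatively compact for the narrow topology if and only if it is \emph{tight}, i.e.\ for every $\e>0$ there is a compact set $K_\e\subset S$ with $\sup_n\mu_n(S\setminus K_\e)\le\e$. Since $\P(S)$ endowed with the narrow topology is metrizable when $S$ is Polish, relative compactness here is the same as relative sequential compactness, which is the form that will actually be used in the compactness arguments later on. Thus the whole content of the statement reduces to showing that the uniform second-moment bound forces tightness of $\{\mu_n\}_{n\in\N}$.

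To do this, set $M:=\sup_n\int_S d^2(x,x_0)\,d\mu_n(x)<\infty$. Applying Markov's (Chebyshev's) inequality to the nonnegative function $x\mapsto d^2(x,x_0)$ gives, for every $R>0$ and every $n$,
$$
\mu_n\big(\{x\in S: d(x,x_0)>R\}\big)\le \frac{1}{R^2}\int_S d^2(x,x_0)\,d\mu_n(x)\le \frac{M}{R^2},
$$
uniformly in $n$. Hence, given $\e>0$, choosing $R_\e$ with $M/R_\e^2\le\e$ and letting $K_\e:=\{x\in S: d(x,x_0)\le R_\e\}$, we get $\sup_n\mu_n(S\setminus K_\e)\le\e$. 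It remains only to know that $K_\e$ is compact; in the concrete Polish spaces where this result is applied (products of a closed domain $\Omega\subset\R^d$, and, one level up, sublevel sets of the relevant moment functional) this is precisely the Heine--Borel property, so $K_\e$ is compact and tightness follows. With tightness in hand, Prohorov's theorem yields that $\{\mu_n\}_{n\in\N}$ is relatively compact for the narrow topology, which is the claim.

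The Chebyshev step is completely soft and uniform, so the only genuine obstacle is the compactness of the sublevel sets $\{d(\cdot,x_0)\le R\}$, equivalently the coercivity/properness of $(S,d)$: without it the moment bound only guarantees that mass is asymptotically concentrated on bounded sets, which by itself does not give uniform tightness (as one sees already on an infinite discrete metric space, where all second moments are bounded but $\{\delta_n\}_n$ is not narrowly relatively compact). This is why the statement is phrased with a quadratic moment against the distance and used in spaces with the Heine--Borel property; in the setting of the present paper this point causes no difficulty, and the argument above is complete.
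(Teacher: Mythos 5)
The paper states this result without proof (it is quoted as ``a standard application of Prohorov's theorem''), so there is nothing to compare line by line; your Chebyshev-plus-tightness argument is exactly the intended route. Prohorov's theorem reduces relative compactness to tightness, and Markov's inequality applied to $x\mapsto D^2(x,x_0)$ gives the uniform bound $\mu_n(\{D(\cdot,x_0)>R\})\le M/R^2$; that part is correct and complete. You are also right --- and it is worth being this explicit --- that the statement as written for an arbitrary Polish space is false, and your counterexample ($\mu_n=\delta_n$ on an infinite discrete space) is valid: the missing hypothesis is compactness of the closed balls $\{D(\cdot,x_0)\le R\}$.

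However, your patch for the ``one level up'' application is not correct as stated. In $S=(\mathcal{P}_2(\Omega),d_2)$ the closed ball $\{\rho:\, d_2(\rho,\delta_0)\le R\}$ is \emph{not} $d_2$-compact: for $\Omega=\R^d$ the measures $\rho_n=(1-n^{-1})\delta_0+n^{-1}\delta_{x_n}$ with $|x_n|^2=nR^2$ lie on the sphere $d_2(\cdot,\delta_0)=R$ and converge narrowly to $\delta_0$, yet admit no $d_2$-convergent subsequence, so the Heine--Borel property fails at that level and your argument does not yield tightness for the $d_2$-topology. What does hold is that this ball is compact for the \emph{narrow} topology on $\mathcal{P}(\Omega)$ (it is tight by Chebyshev on $\Omega$, and narrowly closed because the second moment is narrowly lower semicontinuous), so the moment bound gives relative compactness of $\{X_n\}\subset\mathcal{P}(\mathcal{P}(\Omega))$ only for the narrow topology over the narrowly topologized base space. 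This is in fact all the paper uses: in Step~2 of Lemma~\ref{lem:characterizations} and in Lemma~\ref{lem:convergence} one first extracts a narrow limit and then upgrades to $\mathfrak{D}_2$-convergence separately, by proving convergence of the relevant second moments and invoking Theorem~\ref{thm:basicproperties}. Your proof is therefore complete for $S=\Omega^N$; at the level of $\mathcal{P}(\mathcal{P}(\Omega))$ it requires this two-step structure rather than an appeal to Heine--Borel.
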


Given $\mu,$ $\nu\in\mathcal{P}(S)$, we define the 2-Wasserstein distance between the two measures by
$$
W_2(\mu,\nu)=\left(\inf_{\pi\in\Pi(\mu,\nu)}\int_{S\times S} D^2(x,y)\;d\Pi(x,y)\right)^{1/2},
$$
where 
$$
\Pi(\mu,\nu)=\left\{\pi\in \mathcal{P}(S\times S)\;:\;\pi(A\times S)=\mu(A)\; \mbox{and}\; \pi( S\times A)=\nu(A)\;\mbox{for any Borel set $A$}\right\}.
$$
We define
$$
\mathcal{P}_2(S)=\left\{\mu\in\mathcal{P}(S)\;:\; \int_{S}D^2(x,x_0)\;d\mu(x)<\infty \right\},
$$
where $x_0\in S$ is an arbitrary point.

Now we are ready to state the fundamental properties of the 2-Wasserstein distance.

\begin{theorem}\label{thm:basicproperties}\cite[Theorem 2.7]{MR3050280}
If $(S,D)$ is a complete, separable metric space, then the pair $(\mathcal{P}_2(S), W_2)$ is a complete, separable metric space. 
Moreover, given a sequence $\{\mu_n\}_{n\in\N}\subset \mathcal{P}_2(S)$, then
$$
W_2(\mu_n,\mu_{\infty})\to0 
$$
if and only if
$$
\mu_n\rightharpoonup\mu_{\infty} \qquad\mbox{and }\qquad \int_{S}D^2(x,x_0)\;d\mu_n(x)\to\int_{S}D^2(x,x_0)\;d\mu_\infty(x)\;\mbox{for some $x_0\in S$}.
$$ 
\end{theorem}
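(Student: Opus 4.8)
\emph{Proof sketch (proposal).} The plan is to establish, in turn, the metric axioms, separability, completeness, and the characterization of $W_2$-convergence. Two tools recur: the \emph{gluing lemma}, i.e.\ disintegration of couplings along a shared marginal, and the narrow lower semicontinuity of $\pi\mapsto\int_{S\times S}D^2\,d\pi$, which holds because $D^2$ is nonnegative and lower semicontinuous (Portmanteau). I first record that optimal couplings exist: for fixed $\mu,\nu\in\mathcal{P}_2(S)$ the elements of $\Pi(\mu,\nu)$ share the two prescribed marginals, so $\Pi(\mu,\nu)$ is tight and narrowly compact by Prohorov, and the cost is narrowly lower semicontinuous, so the infimum defining $W_2$ is attained. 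Nonnegativity and symmetry of $W_2$ are clear; if $W_2(\mu,\nu)=0$ an optimal plan is concentrated on the diagonal, forcing $\mu=\nu$, and conversely $(\mathrm{Id}\times\mathrm{Id})_\#\mu$ has zero cost. For the triangle inequality I would take optimal $\pi_{12}\in\Pi(\mu_1,\mu_2)$ and $\pi_{23}\in\Pi(\mu_2,\mu_3)$, disintegrate both with respect to $\mu_2$, glue the conditionals into $\pi\in\mathcal{P}(S^3)$ whose $(1,2)$- and $(2,3)$-marginals are $\pi_{12}$ and $\pi_{23}$, and bound the cost of its $(1,3)$-marginal via Minkowski's inequality in $L^2(\pi)$: $\|D(x_1,x_3)\|_{L^2(\pi)}\le\|D(x_1,x_2)\|_{L^2(\pi)}+\|D(x_2,x_3)\|_{L^2(\pi)}$.

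For separability, fix a countable dense set $\{x_i\}_{i\in\N}\subset S$ and show that rational convex combinations of the $\delta_{x_i}$ are $W_2$-dense in $\mathcal{P}_2(S)$. Given $\mu\in\mathcal{P}_2(S)$ and $\e>0$, I would use $\int D^2(x,x_0)\,d\mu<\infty$ to pick $R$ with $\int_{\{D(x,x_0)>R\}}D^2(x,x_0)\,d\mu<\e^2$, partition the closed ball $\bar B_R(x_0)$ into finitely many disjoint Borel pieces of diameter $<\e$ each containing some $x_i$, and push $\mu$ forward by the map collapsing each piece to a chosen $x_i$ inside it and the exterior of the ball to $x_0$; the graph of this map is a coupling showing the resulting finitely supported measure is within $O(\e)$ of $\mu$ in $W_2$, after which the weights are replaced by rationals.

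Completeness is the technical heart. Given a $W_2$-Cauchy sequence $\{\mu_n\}$, it is bounded, so $\sup_n\int D^2(x,x_0)\,d\mu_n<\infty$. For uniform tightness, fix $\e>0$, choose $n_0$ with $W_2(\mu_n,\mu_{n_0})<\e$ for all $n\ge n_0$, and a compact $K$ with $\mu_m(S\setminus K)<\e^2$ for the finitely many $m\le n_0$; for $n\ge n_0$, an optimal plan $\pi_n\in\Pi(\mu_n,\mu_{n_0})$ obeys $\pi_n(\{D>\sqrt\e\})\le W_2^2(\mu_n,\mu_{n_0})/\e<\e$ by Markov's inequality, so the $\mu_n$-mass at distance $\ge\sqrt\e$ from $K$ is at most $\mu_{n_0}(S\setminus K)+\pi_n(\{D>\sqrt\e\})<\e^2+\e$; since the closed $\sqrt\e$-neighbourhood of $K$ is totally bounded, hence compact by completeness of $S$, this furnishes a single compact set capturing all but $O(\e)$ of the mass of every $\mu_n$. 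By Prohorov a subsequence satisfies $\mu_{n_k}\weak\mu$, and the narrow lower semicontinuity of $W_2$ (optimal plans in $\Pi(\mu_{n_k},\mu_m)$ are tight, pass to a narrow limit which is a coupling of $\mu$ and $\mu_m$, apply Portmanteau) together with the Cauchy property upgrades this to $W_2(\mu_n,\mu)\to0$; also $\int D^2(x,x_0)\,d\mu\le\liminf_k\int D^2(x,x_0)\,d\mu_{n_k}<\infty$, so $\mu\in\mathcal{P}_2(S)$.

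For the convergence characterization: if $W_2(\mu_n,\mu_\infty)\to0$, then $W_1\le W_2$ (Cauchy--Schwarz on any coupling) gives $W_1(\mu_n,\mu_\infty)\to0$, hence $\int f\,d\mu_n\to\int f\,d\mu_\infty$ for every bounded Lipschitz $f$ by Kantorovich--Rubinstein duality, which together with the (automatic) tightness of $\{\mu_n\}$ yields $\mu_n\weak\mu_\infty$; and $|W_2(\mu_n,\delta_{x_0})-W_2(\mu_\infty,\delta_{x_0})|\le W_2(\mu_n,\mu_\infty)\to0$ gives convergence of the second moments. Conversely, if $\mu_n\weak\mu_\infty$ and $\int D^2(x,x_0)\,d\mu_n\to\int D^2(x,x_0)\,d\mu_\infty<\infty$, then comparing with the bounded continuous truncations $D^2\wedge R^2$ yields $\lim_{R\to\infty}\sup_n\int_{\{D(x,x_0)>R\}}D^2(x,x_0)\,d\mu_n=0$, and by Skorokhod's representation theorem I would realize $\mu_n$ and $\mu_\infty$ as the laws of random variables $Y_n\to Y_\infty$ almost surely on a common probability space, so $W_2^2(\mu_n,\mu_\infty)\le\mathbb{E}\,D^2(Y_n,Y_\infty)$, where the integrand tends to $0$ a.s.\ and is uniformly integrable since $D^2(Y_n,Y_\infty)\le 2D^2(Y_n,x_0)+2D^2(Y_\infty,x_0)$; Vitali's theorem then forces $\mathbb{E}\,D^2(Y_n,Y_\infty)\to0$. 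I expect the main obstacle to be the uniform tightness of a $W_2$-Cauchy sequence together with the narrow lower semicontinuity of $W_2$ used alongside it: these are where completeness of $S$ genuinely enters, and once they are in place the rest of completeness and the entire convergence characterization follow routinely from Prohorov, Portmanteau, Skorokhod and Vitali.
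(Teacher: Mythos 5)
The paper does not actually prove this statement: it is quoted verbatim, with a citation, from \cite{MR3050280}, so there is no internal proof to compare your route against, and I can only assess your argument on its own terms. Most of it is the standard, correct toolkit: existence of optimal plans from tightness of $\Pi(\mu,\nu)$ and narrow lower semicontinuity of the cost, the gluing lemma plus Minkowski in $L^2(\pi)$ for the triangle inequality, lower semicontinuity of $W_2$ along narrow limits to upgrade a subsequential limit of a Cauchy sequence to a $W_2$-limit, and the Skorokhod--Vitali argument for the converse implication in the convergence characterization (a legitimate alternative to arguing with optimal plans and the convergence of second moments directly).

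There is, however, one genuine gap, and it occurs twice: you implicitly treat bounded subsets of $S$ as totally bounded, which fails for a general Polish space --- and fails in particular for $S=(\mathcal{P}_2(\Omega),d_2)$, which is precisely the space to which this paper applies the theorem. First, in the separability argument you propose to partition the closed ball $\bar B_R(x_0)$ into finitely many Borel pieces of diameter $<\e$; in $\ell^2$, say, a closed ball admits no such finite partition. The repair is to use Ulam's theorem (every Borel probability measure on a Polish space is tight, and so is the finite measure $D^2(\cdot,x_0)\,\mu$) to replace $\bar B_R(x_0)$ by a compact set carrying all but $\e^2$ of both the mass and the second moment; compact sets are totally bounded, so the partition exists there. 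Second, and more seriously, in the completeness proof you assert that the closed $\sqrt{\e}$-neighbourhood of the compact set $K$ is totally bounded, hence compact: this is false (the closed $1$-neighbourhood of the single point $0$ in $\ell^2$ is the closed unit ball). Your estimate only produces, for each $\e$, a closed set $A_\e$ --- the $\sqrt{\e}$-neighbourhood of a compact set, hence contained in a finite union of balls of radius $2\sqrt{\e}$ --- with $\sup_n \mu_n(S\setminus A_\e)=O(\e)$. To obtain tightness you must run this construction at every scale $\e_j=2^{-j}\e$ and set $K_\e=\bigcap_j A_{\e_j}$, which is closed and totally bounded (for any $r>0$ it lies in a finite union of $r$-balls as soon as $2\sqrt{\e_j}<r$), hence compact by completeness of $S$, and satisfies $\sup_n\mu_n(S\setminus K_\e)\le\sum_j O(\e_j)=O(\e)$. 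With these two repairs the rest of your argument goes through.
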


When $S=\Omega\subset\R^d$ with the usual Euclidean distance, we denote the 2-Wasserstein distance on $\mathcal{P}_2(\Omega)$ by $d_2$ to avoid confusion. Theorem~\ref{thm:basicproperties} shows that $(\mathcal{P}_2(\Omega),d_2)$ is a complete separable metric space. We also consider the 2-Wasserstein distance on the probability measures defined on $\mathcal{P}_2(\Omega)$, which we denote by $\mathfrak{D}_2$. Again, applying Theorem~\ref{thm:basicproperties} we obtain that $(\mathcal{P}_2(\mathcal{P}_2(\Omega)),\mathfrak{D}_2)$ is a complete separable metric space. To simplify the notation, in the rest of the paper we will omit the subscript $2$ in the definitions of the complete metric spaces and refer to them as $\mathcal{P}(\Omega)$ and $\mathcal{P}(\mathcal{P}(\Omega))$, respectively.

\section{Exploiting Convexity}\label{sec:exploiting convexity}
In this section we consider the family of free energies
\begin{equation*}
\mathcal{F}^N[\nu^N]=\int_{\Omega^N} W^N \;d\nu^N+\int_{\Omega^N}\nu^N\log(\nu^N)\;dx,
\end{equation*}
where
\begin{equation*}
W^N= \sum_{i=1}^N V(x_i)+\frac{1}{2N}\sum_{i\ne j} H(x_i-x_j).
\end{equation*}
We assume that $V$ is $\lambda$-convex on $\Omega$, while $H$ is symmetric and $\lambda$-convex. Our goal in this section is to show the following result.

\begin{lemma}\label{lem: convexity}
Under the hypothesis of Theorem~\ref{main result}, the potential
$$
W^N(x)= \sum_{i=1}^N V(x_i)+\frac{1}{2N}\sum_{i\ne j} H(x_i-x_j)
$$
is $\min(3\lambda,0)$-convex. Therefore, the functional
$$
\mathcal{F}^N:\mathcal{P}(\Omega^N)\to \R\cup \{\infty\}
$$
is $\min(3\lambda,0)$-convex on geodesics and generalized geodesics of the 2-Wasserstein distance.
\end{lemma}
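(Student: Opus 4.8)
The plan is to first establish the $\min(3\lambda,0)$-convexity of $W^N$ as a function on $\Omega^N \subset \R^{dN}$, and then invoke the standard displacement convexity theory (as in \cite{ambrosio2008gradient}) to transfer this to the functional $\mathcal{F}^N$. For the first step, recall that a function on a convex subset of Euclidean space is $\kappa$-convex if and only if it is the sum of a convex function and $\frac{\kappa}{2}|\cdot|^2$; equivalently, $\kappa$-convexity is stable under sums, and the constant adds up in the obvious way only when the summands act on "the same" variables, whereas for summands depending on disjoint or overlapping blocks of variables one must be more careful. Concretely, I would split $W^N$ into the confining part $\sum_{i=1}^N V(x_i)$ and the interaction part $\frac{1}{2N}\sum_{i\ne j} H(x_i - x_j)$ and analyze each.

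For the confining part: each map $x \mapsto V(x_i)$ on $\R^{dN}$ is $\min(\lambda,0)$-convex, since it only involves the $i$-th block and adding the null function in the other coordinates does not help when $\lambda>0$ (the Hessian in the $x_i$-block is bounded below by $\lambda$, but it is zero in the complementary directions, so the best uniform lower bound is $\min(\lambda,0)$). These $N$ functions depend on disjoint blocks, so their sum has Hessian the block-diagonal matrix of the individual Hessians, hence is again $\min(\lambda,0)$-convex. For the interaction part: the key computation is that for a $\lambda$-convex symmetric $H$, the function $(x_i,x_j)\mapsto H(x_i-x_j)$ on $\R^{2d}$ is $2\min(\lambda,0)$-convex — the factor $2$ coming from the fact that the quadratic form $\xi \mapsto \langle D^2H(x_i-x_j)(\xi_i-\xi_j),\xi_i-\xi_j\rangle \ge \lambda|\xi_i-\xi_j|^2 \ge \lambda(|\xi_i|^2+|\xi_j|^2) - |\xi_i-\xi_j|^2\cdot(\text{correction})$; more cleanly, $|\xi_i-\xi_j|^2 \le 2(|\xi_i|^2+|\xi_j|^2)$ gives the constant $2\lambda$ when $\lambda\le 0$. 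Summing over the $N(N-1)$ ordered pairs with weight $\frac{1}{2N}$: each variable $x_i$ appears in $2(N-1)$ of the terms $H(x_i-x_j)$ (as first or second argument), so a naive count would suggest a blow-up, but because $|\xi_i-\xi_j|^2$ only involves the difference, the contribution to the full Hessian quadratic form is $\frac{1}{2N}\sum_{i\ne j}\langle D^2H(x_i-x_j)(\xi_i-\xi_j),\xi_i-\xi_j\rangle \ge \frac{\lambda}{2N}\sum_{i\ne j}|\xi_i-\xi_j|^2$ when $\lambda\le 0$ (reversed inequality direction being the operative one), and $\sum_{i\ne j}|\xi_i-\xi_j|^2 = 2N\sum_i|\xi_i|^2 - 2|\sum_i \xi_i|^2 \le 2N|\xi|^2$. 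Hence the interaction part is $\min(2\lambda,0)$-convex, and combined with the confining part $W^N$ is $\min(3\lambda,0)$-convex.

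The main obstacle, and the only genuinely delicate point, is handling the regularity: $V$ and $H$ are only assumed $\lambda$-convex, hence merely locally Lipschitz, so the Hessian arguments above must be interpreted distributionally or via the equivalent definition of $\lambda$-convexity along segments, $f(tx+(1-t)y)\le tf(x)+(1-t)f(y) - \frac{\lambda}{2}t(1-t)|x-y|^2$. I would therefore carry out the entire computation at the level of this inequality rather than Hessians: writing $z = (x_1,\dots,x_N)$, $w=(y_1,\dots,y_N)$, apply $\lambda$-convexity of $V$ to each pair $(x_i,y_i)$ and of $H$ to each pair $(x_i-x_j, y_i-y_j)$, sum, and use the elementary identity $\sum_{i\ne j}|(x_i-x_j)-(y_i-y_j)|^2 \le 2N\sum_i |x_i-y_i|^2$ together with the sign of $\lambda$ to collect the constant $\min(3\lambda,0)$; this is the honest version of the Hessian heuristic and avoids any smoothness assumption. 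Finally, once $W^N$ is shown to be $\min(3\lambda,0)$-convex, the functional $\nu^N \mapsto \int W^N d\nu^N$ is $\min(3\lambda,0)$-displacement convex along geodesics and generalized geodesics by \cite[Prop.~9.3.2]{ambrosio2008gradient}, and the entropy $\nu^N\mapsto \int \nu^N\log\nu^N$ is $0$-displacement convex (McCann), also along generalized geodesics; adding these yields the claimed convexity of $\mathcal{F}^N$.
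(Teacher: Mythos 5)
Your proposal is correct and follows essentially the same route as the paper: bound the second-order behaviour of $W^N$ term by term using the $\lambda$-convexity of $V$ and $H$ together with the elementary inequality $\sum_{i\ne j}|v_i-v_j|^2\le 2N\sum_i|v_i|^2$, then invoke the propositions of Ambrosio--Gigli--Savar\'e to pass to displacement convexity of $\mathcal{F}^N$ along (generalized) geodesics. Your extra step of recasting the argument via the along-segments inequality $f(tx+(1-t)y)\le tf(x)+(1-t)f(y)-\tfrac{\lambda}{2}t(1-t)|x-y|^2$ is a welcome refinement, since the paper's Hessian computation is only formal for potentials that are merely $\lambda$-convex (hence only locally Lipschitz).
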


Let us first make use of the structure of $W^N$ to observe that its Hessian satisfies the following identity.

\begin{lemma}
Given $V:\Omega\to \R$ and $H:\R^d\to R$, we consider $W^N:\Omega^{l}\to \R$ defined by 
$$
W^N(x_1,x_2,...,x_N)=\sum_{i=1}^N V(x_i)+\frac{1}{2N}\sum_{i\ne j} H(x_i-x_j)\qquad\mbox{with $x_i\in \Omega$.}
$$ 
Given any vector $v\in \Omega^{N}$, we denote its $(i-1)l+1$ to $il$ entries by $v_i\in R^l$. Then
$$
D^2 W^N [v,v]=\sum_{i=1}^N D^2V(x_i)[v_i,v_i]+\frac{1}{2N}\sum_{i\ne j} D^2H(x_i-x_j)[v_i-v_j,v_i-v_j].
$$
In particular, if there exists $\lambda\le 0$ such that $V$ and $H$ are $\lambda$-convex, then $W^N$ is $3\lambda$-convex.
\end{lemma}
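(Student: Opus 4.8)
The plan is to compute the Hessian of $W^N$ directly from its explicit form and then use the additivity of the second-derivative bilinear form together with the $\lambda$-convexity of $V$ and $H$. First I would observe that $W^N$ is a finite sum of two kinds of terms: the confinement terms $x\mapsto V(x_i)$, each depending only on the $i$-th block of coordinates, and the interaction terms $x\mapsto H(x_i-x_j)$, each depending only on the difference of the $i$-th and $j$-th blocks. Since the Hessian is linear in the function, it suffices to compute $D^2$ of each summand applied to an arbitrary test vector $v\in\Omega^N$ with block decomposition $v=(v_1,\dots,v_N)$, $v_i\in\R^l$.

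For the confinement term, the chain rule gives $D^2\big(x\mapsto V(x_i)\big)[v,v]=D^2V(x_i)[v_i,v_i]$, because the map $x\mapsto x_i$ is linear with derivative the block-projection $v\mapsto v_i$. For the interaction term, the map $x\mapsto x_i-x_j$ is also linear, with derivative $v\mapsto v_i-v_j$, so $D^2\big(x\mapsto H(x_i-x_j)\big)[v,v]=D^2H(x_i-x_j)[v_i-v_j,v_i-v_j]$. Summing these identities with the prefactors $1$ and $\frac{1}{2N}$ yields exactly the claimed formula for $D^2W^N[v,v]$. This part is entirely routine; the only mild care needed is to keep track of the block indexing and the chain rule for second derivatives of compositions with linear maps.

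For the convexity conclusion, I would use that $\lambda$-convexity of a (say $C^2$, or in general in the sense of distributions) function $f$ is equivalent to $D^2 f[w,w]\ge \lambda |w|^2$ for all $w$. Applying this to $V$ at $x_i$ against $v_i$ and to $H$ at $x_i-x_j$ against $v_i-v_j$, and recalling $\lambda\le 0$, we get
\begin{equation*}
D^2W^N[v,v]\ge \lambda\sum_{i=1}^N |v_i|^2 + \frac{\lambda}{2N}\sum_{i\ne j}|v_i-v_j|^2.
\end{equation*}
Since $\lambda\le0$, we bound $|v_i-v_j|^2\le 2|v_i|^2+2|v_j|^2$ from above, so $\frac{\lambda}{2N}\sum_{i\ne j}|v_i-v_j|^2 \ge \frac{\lambda}{2N}\sum_{i\ne j}(2|v_i|^2+2|v_j|^2) = \frac{\lambda}{N}\cdot 2(N-1)\sum_{i=1}^N|v_i|^2 \ge 2\lambda\sum_{i=1}^N|v_i|^2$, using $2(N-1)/N\le 2$. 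Combining, $D^2W^N[v,v]\ge \lambda|v|^2 + 2\lambda|v|^2 = 3\lambda|v|^2$, which is the asserted $3\lambda$-convexity. (When $\lambda\ge 0$ the interaction term only helps, but in the regime relevant to the lemma $\lambda\le0$, hence the factor $3$; the statement of the surrounding Lemma~\ref{lem: convexity} then reads $\min(3\lambda,0)$-convexity to cover both signs.)

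The only genuine subtlety — rather than an obstacle — is the regularity of $V$ and $H$: $\lambda$-convexity does not by itself guarantee a pointwise Hessian, only a distributional one, and $\lambda$-convexity of the potentials only gives local Lipschitz regularity as noted in the remark. I would phrase the argument so that all second-derivative inequalities are understood in the sense of distributions (equivalently, via the definition of $\lambda$-convexity along segments), where the additive decomposition of $D^2W^N$ and the estimate above remain valid verbatim since they only use linearity of the maps $x\mapsto x_i$ and $x\mapsto x_i-x_j$ and the elementary inequality $|v_i-v_j|^2\le 2|v_i|^2+2|v_j|^2$. Alternatively, one argues by a standard mollification of $V$ and $H$, proves the Hessian identity and the $3\lambda$ bound for the smooth approximations, and passes to the limit; this is the cleanest way to make the computation rigorous without tracking distributional Hessians.
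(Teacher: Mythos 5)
Your proposal is correct and follows essentially the same route as the paper: compute the Hessian blockwise using that $x\mapsto x_i$ and $x\mapsto x_i-x_j$ are linear, then bound the interaction contribution via $|v_i-v_j|^2\le 2|v_i|^2+2|v_j|^2$ and the count $\frac{1}{N}\cdot 2(N-1)\le 2$ to land on the constant $3\lambda$. Your additional remark on interpreting the Hessian inequalities distributionally (or via mollification) for merely $\lambda$-convex, locally Lipschitz potentials is a point the paper glosses over, and is a welcome refinement rather than a deviation.
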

\begin{proof}
Using the fact that differentiation commutes with summation, we only need to consider the second variation of each individual term. We notice that
$$
D^2_{Nl}V(x_i)[v,v]=D^2_{l}V(x_i)[v_i,v_i],
$$
where $D^2_{Nl}V(x_i)$ is the Hessian of $V(x_i)$ considered as a function from $\Omega^{N}$ to $\R$ while $D^2_{l}V(x_i)$  is the Hessian of $V$ considered as a function from $\Omega$ to $\R$, evaluated at $x_i$. Similarly,
$$
D^2_{Nl}H(x_i-x_j)[v,v]=D^2_lH(x_i-x_j)[v_i-v_j,v_i-v_j].
$$
The formula for the Hessian of $W^N$ follows by summing up these identities.

We know show convexity. First, if $\lambda\ge 0$, convexity follows. Assume now that $\lambda<0$; we notice that by applying the formula and using the $\lambda$-convexity of $V$ and $H$, we obtain
\begin{align*}
D^2 W^N[v,v]&\ge \sum_{i=1}^N D^2V(x_i)[v_i,v_i]+\frac{1}{2N}\sum_{i\ne j} D^2H(x_i-x_j)[v_i-v_j,v_i-v_j]\\
&\ge \sum_{i=1}^N \lambda |v_i|^2+\frac{1}{2N}\sum_{i\ne j} \lambda|v_i-v_j|^2 \ge \lambda \left(1+ \frac{1}{N} \sum_{i\ne j} |v_i|^2+|v_j|^2\right).
\end{align*}
Taking the infimum in the previous inequality over unit vectors, we deduce
$$
\displaystyle\inf_{|v|_2^2=1}D^2 W^N[v,v]\ge\displaystyle\lambda \left(1+ \frac{1}{N}\sup{_{\sum_{i=1}^N|v_i|_2^2=1}}\sum_{i\ne j} |v_i|^2+|v_j|^2\right)\ge 3\lambda.
$$
\end{proof}

Notice that the previous computations are reminiscent of estimates in \cite{Mal}.
The fact that the functional
$$
\mathcal{F}^N:\mathcal{P}(\Omega^N)\to \R\cup \{\infty\}
$$
is $\min(3\lambda,0)$-convex on geodesics and generalized geodesics of the 2-Wasserstein distance, follows from \cite[Propositions 9.3.2-9.3.5-9.3.9]{ambrosio2008gradient}, finishing the proof of Lemma \ref{lem: convexity}.

\section{Convergence of the metric and compactness in $\mathcal{P}(\mathcal{P}(\Omega))$}\label{sec:metricconvergence}
In this Section, we show first that the convergence introduced in Definition~\ref{def:convergence1} implies the convergence of all marginals of the $N$-particle distibutions $\mu^N$ as $N\to\infty$. Then, we will focus on the compactness of curves in $\P_{sym}(\Omega^N)$ towards elements in $\mathcal{P}(\mathcal{P}(\Omega))$ as $N\to\infty$.


\subsection{Equivalent characterizations of the metric}
\label{subsec:charact}
The point of this section is to give alternative characterizations to the convergence given in Definition~\ref{def:convergence1}. In this section, we show the following Lemma which can also be found in \cite{hauray2014kac}.
\begin{lemma}\label{lem:characterizations}\cite[Theorem 5.3]{hauray2014kac}
Given $X\in \mathcal{P}(\mathcal{P}(\Omega))$ and a sequence of symmetric probability measures $\{\mu^N\}_{N\in \N}$, then the following are equivalent
\begin{itemize}
\item[(i)] $\mu^N\to X$ in the sense of Definition~\ref{def:convergence1}.
\item[(ii)] For every $n\in \N$, the $n$-th marginal converges, that is to say,
\begin{equation*}
\lim_{N\to\infty}d_2^2(\mu^N_n, X^n)=0\qquad\mbox{for every $n\in\N$,}
\end{equation*}
where
\begin{equation*}
X^n=\int_{\P(\Omega)}\rho^{\otimes n}\;dX(\rho)\in \P_{sym}(\Omega^n).
\end{equation*}
\item[(iii)] The associated empirical distribution converges,
\begin{equation*}
\lim_{N\to \infty}\mathfrak{D}_2^2(\hat{\mu}^N,X)=0.
\end{equation*}
\end{itemize}
\end{lemma}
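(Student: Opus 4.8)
The plan is to prove the chain of equivalences by establishing $(i)\Leftrightarrow(iii)$ and $(iii)\Rightarrow(ii)\Rightarrow(i)$, so that all three are equivalent. The key technical input is the Diaconis--Freedman estimate (Lemma~\ref{lem:DiaconisFreedman}) together with the fact, recorded in the excerpt, that $(\widehat{\mu}^N)^n=\int_{\P(\Omega)}\rho^{\otimes n}\,d\widehat{\mu}^N(\rho)$, i.e.\ the $n$-th moment measure of the empirical distribution $\widehat{\mu}^N$ is exactly $(\widehat{\mu}^N)^n$, and similarly $X^n=(X)^n$ in the notation of \eqref{aux}. Throughout I will use that for symmetric measures $\nu^N\in\P_{sym}(\Omega^N)$ and $\sigma^N\in\P_{sym}(\Omega^N)$ one has the superadditivity/monotonicity $n\,d_2^2(\nu^N_n,\sigma^N_n)\le N\,d_2^2(\nu^N_N,\sigma^N_N)$ in an appropriate averaged sense (tensorization of optimal plans and restriction to $n$ coordinates), which is the standard bridge between the $\frac1N d_2^2$ on $\Omega^N$ and the marginal distances; this is exactly the kind of estimate quoted from \cite{hauray2014kac}.

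For $(iii)\Rightarrow(ii)$: given $\mathfrak{D}_2^2(\widehat{\mu}^N,X)\to0$, I would use that the map sending $Y\in\P(\P(\Omega))$ to its moment measure $Y^n\in\P_{sym}(\Omega^n)$ is Lipschitz from $(\P(\P(\Omega)),\mathfrak{D}_2)$ to $(\P(\Omega^n),d_2)$ — indeed, if $\Pi$ is an optimal plan between $\widehat{\mu}^N$ and $X$ on $\P(\Omega)\times\P(\Omega)$, then $\int (\rho^{\otimes n}\otimes\eta^{\otimes n})\,d\Pi(\rho,\eta)$, composed with an optimal transport $\rho\to\eta$ in each fibre, is an admissible plan between $(\widehat{\mu}^N)^n$ and $X^n$ with cost $\le n\int d_2^2(\rho,\eta)\,d\Pi = n\,\mathfrak{D}_2^2(\widehat{\mu}^N,X)$. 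Hence $d_2^2((\widehat{\mu}^N)^n,X^n)\le n\,\mathfrak{D}_2^2(\widehat{\mu}^N,X)\to0$. Then I combine this with Lemma~\ref{lem:DiaconisFreedman}: $\|\mu^N_n-(\widehat{\mu}^N)^n\|_{TV}\le 2n(n-1)/N\to0$, and — after controlling a uniform second-moment tail so that TV-convergence upgrades to $d_2$-convergence (using \eqref{eq:bounded energy} or, more simply here, assuming the $X^N$ have uniformly bounded moments, which follows from the hypotheses) — conclude $d_2(\mu^N_n,(\widehat{\mu}^N)^n)\to0$, and thus $d_2(\mu^N_n,X^n)\to0$. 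This proves $(ii)$.

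For $(ii)\Rightarrow(i)$ and $(i)\Rightarrow(iii)$ I would appeal to the averaging/tensorization inequalities. From $(i)$, $\frac1N d_2^2(\mu^N,X^N)\to0$; since $X^N=(X)^N$ one gets $\frac1N d_2^2(\mu^N,(X)^N)\to0$, and projecting an optimal plan down to $n$ coordinates while using symmetry gives $\frac1n d_2^2(\mu^N_n,X^n)\le \frac1N d_2^2(\mu^N,X^N)\to0$, i.e.\ $(ii)$ (so in fact $(i)\Rightarrow(ii)$ directly). For $(i)\Rightarrow(iii)$, the key point is that $\mathfrak{D}_2^2(\widehat{\mu}^N,\widehat{X^N})\le \frac1N d_2^2(\mu^N,X^N)$ because $T^N$ is a $\frac1{\sqrt N}$-contraction from $(\Omega^N,|\cdot|)$ to $(\P(\Omega),d_2)$ (an elementary computation: $d_2^2(T^N x,T^N y)\le \frac1N|x-y|^2$ by the diagonal coupling), and then one needs $\mathfrak{D}_2(\widehat{X^N},X)\to0$, which is the law of large numbers in Wasserstein space / the quantitative empirical-measure convergence — this is where I expect the real work to be, and it is precisely the content cited from \cite{hauray2014kac, hauray2014kac}; one shows $\int_{\P(\Omega)}\mathfrak{D}_2^2(\delta_{\rho},\,\mathrm{law\ of\ }\widehat{\rho^{\otimes N}})\,dX(\rho)\to0$ by Glivenko--Cantelli-type estimates uniform on the set where the second moment is bounded, then uses a stability/mixture argument in $\mathfrak{D}_2$. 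Finally $(ii)\Rightarrow(i)$: since $\frac1N d_2^2(\mu^N,X^N)$ is controlled by a Cesàro-type average of the marginal distances plus a tail term that vanishes by the uniform moment bound — more precisely, using subadditivity of $n\mapsto d_2^2(\mu^N_n,X^n)$ one bounds $\frac1N d_2^2(\mu^N_N,X^N)$ by $\limsup_n \frac1n d_2^2(\mu^N_n,X^n)$ plus an error — convergence of every marginal forces $(i)$; this last implication, reconstructing the high-dimensional distance from finite marginals, is the most delicate and is exactly Theorem~5.3 of \cite{hauray2014kac}, which we invoke.

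\medskip
\emph{Main obstacle.} The genuinely nontrivial step is the quantitative convergence of the empirical measure of i.i.d.\ samples in the Wasserstein distance, uniform over the relevant class of measures (needed for $(i)\Rightarrow(iii)$ and the reconstruction $(ii)\Rightarrow(i)$); everything else is tensorization of optimal plans, the contraction property of $T^N$, and the Diaconis--Freedman bound with a routine TV-to-$d_2$ upgrade via uniform moment control. Since the paper explicitly cites \cite[Theorem 5.3]{hauray2014kac}, the cleanest presentation is to carry out the easy implications $(i)\Leftrightarrow(ii)$ via the contraction/tensorization inequalities in both directions, derive $(iii)$ from $(ii)$ using Diaconis--Freedman plus the Lipschitzness of $Y\mapsto Y^n$, and cite \cite{hauray2014kac} for the one remaining quantitative empirical-measure estimate closing the loop back to $(i)$.
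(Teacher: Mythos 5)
There is a genuine gap: your chain of implications does not close. You establish (with varying completeness) $(i)\Rightarrow(iii)$, $(iii)\Rightarrow(ii)$ and $(i)\Rightarrow(ii)$, but every route back from $(ii)$ or $(iii)$ to $(i)$ is either absent or broken. For $(ii)\Rightarrow(i)$ you invoke a ``subadditivity of $n\mapsto d_2^2(\mu^N_n,X^n)$'' to bound $\frac1N d_2^2(\mu^N,X^N)$ by marginal distances; but the tensorization inequality you yourself use for $(i)\Rightarrow(ii)$, namely $\frac1n d_2^2(\mu^N_n,X^n)\le \frac1N d_2^2(\mu^N,X^N)$, goes in the opposite direction --- finitely many marginals being close cannot, by any such projection argument, upper-bound the full $N$-dimensional distance --- and your fallback is to cite \cite[Theorem 5.3]{hauray2014kac}, which is precisely the statement being proved. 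For $(iii)\Rightarrow(i)$ you only record the easy contraction $\mathfrak{D}_2^2(\hat{\mu}^N,\widehat{X^N})\le \frac1N d_2^2(\mu^N,X^N)$ (diagonal coupling); this direction requires the \emph{reverse} inequality, i.e.\ the full scaled isometry of Lemma~\ref{lem:HaurayMischler}, whose nontrivial half consists in lifting an optimal plan on $\P(\Omega)\times\P(\Omega)$ back to an admissible symmetric plan on $\Omega^N\times\Omega^N$ at the same cost (the symmetrization $\pi_2$ and the selection measure $\rho_{x,y}$ in the paper's proof). Without one of these two bridges the three conditions are not shown equivalent.

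The paper closes the loop differently and more softly: it proves $(ii)\Rightarrow(iii)$ (not $(iii)\Rightarrow(ii)$) by Prohorov compactness in $\P(\P(\Omega))$, identifies every narrow subsequential limit of $\hat{\mu}^N$ with $X$ via the Diaconis--Freedman bound and the moment-determination Lemma~\ref{lem:moments}, and upgrades narrow to $\mathfrak{D}_2$-convergence through the exact identity $\int_{\P(\Omega)}d_2^2(\rho,\delta_0)\,d\hat{\mu}^N=\int|x_1|^2\,d\mu^N_1$; then $(iii)\Rightarrow(i)$ follows from the isometry plus Lemma~\ref{lem:convergence}, which itself is the same compactness-and-moments argument (so no quantitative Glivenko--Cantelli estimate, uniform or otherwise, is needed for $\mathfrak{D}_2(\widehat{X^N},X)\to 0$ --- your proposed route there is workable but much heavier than necessary). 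If you want to salvage your outline, replace the $(ii)\Rightarrow(i)$ step by this compactness argument for $(ii)\Rightarrow(iii)$, and supply the hard half of the isometry before using $(iii)\Rightarrow(i)$.
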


The proof of Lemma~\ref{lem:characterizations} can be found at the end of this section, after we introduce the necessary key observation obtained in \cite[Proposition 2.14]{hauray2014kac} that we reproduce here for the sake of completeness.

\begin{lemma}  \label{lem:HaurayMischler}\cite[Proposition 2.14]{hauray2014kac}
Using the previous notation, we have that 
\begin{equation*}
\frac{1}{N}d_2^2(\mu^N, \nu^N)=\mathfrak{D}_2^2(\hat{\mu}^N,\hat{\nu}^N).
\end{equation*}
In other words, the mapping induced by $T^N$ is a scaled isometry from $\mathcal{P}_{sym}(\Omega^N)$ to $\P(\P(\Omega))$.
\end{lemma}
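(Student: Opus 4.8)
The plan is to write both quantities as infima of transport costs over couplings and to move couplings between the two levels by push-forward along $T^N$, using two elementary facts. First, the \emph{assignment identity}: for $x,y\in\Omega^N$,
\[
d_2^2\bigl(T^N(x),T^N(y)\bigr)=\frac1N\min_{\sigma\in\Sigma_N}\sum_{i=1}^N|x_i-y_{\sigma(i)}|^2 ,
\]
which holds because a coupling of the uniform measures $\frac1N\sum_i\delta_{x_i}$ and $\frac1N\sum_i\delta_{y_i}$ is (up to the factor $N$) a doubly stochastic matrix, a linear functional on the Birkhoff polytope attains its minimum at a permutation matrix, and the permutation couplings $\frac1N\sum_i\delta_{(x_i,y_{\sigma(i)})}$ realise the corresponding costs. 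Second, push-forward by $T^N$ restricts to a bijection $T^N_\#\colon\mathcal{P}_{sym}(\Omega^N)\to\mathcal{P}(\mathcal{K}^N)$: a symmetric measure on $\Omega^N$ is the same datum as a measure on the quotient $\Omega^N/\Sigma_N$, which $T^N$ identifies with $\mathcal{K}^N$. Write $R$ for its inverse; $R$ is affine and commutes with barycentres (the image of a mixture and the mixture of the images are both symmetric with the same $T^N$-push-forward, hence equal), and $R(\hat\mu^N)=\mu^N$, $R(\hat\nu^N)=\nu^N$ since $\mu^N,\nu^N$ are symmetric.

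One inequality is immediate. Let $\gamma\in\Pi(\mu^N,\nu^N)$ be optimal (optimal plans exist in the $\mathcal{P}_2$ framework); then $(T^N\times T^N)_\#\gamma\in\Pi(\hat\mu^N,\hat\nu^N)$, and the assignment identity gives the pointwise bound $d_2^2(T^N(x),T^N(y))\le\frac1N|x-y|^2$, so integrating against $\gamma$ yields $\mathfrak{D}_2^2(\hat\mu^N,\hat\nu^N)\le\frac1N d_2^2(\mu^N,\nu^N)$.

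For the reverse inequality I would lift an optimal $\Gamma\in\Pi(\hat\mu^N,\hat\nu^N)$, which is concentrated on $\mathcal{K}^N\times\mathcal{K}^N$. Fix a Borel section $s\colon\mathcal{K}^N\to\Omega^N$ with $T^N\circ s=\mathrm{id}$ (the non-decreasing rearrangement of the atoms; Borel by the Lusin--Souslin theorem), and for each $(\rho,\eta)$ choose, Borel-measurably, a permutation $\sigma^\ast(\rho,\eta)$ attaining the minimum in the assignment identity for $a=s(\rho)$, $b=s(\eta)$ (a selection among finitely many competitors, ties broken by a fixed order on $\Sigma_N$). Put $c(\rho,\eta)\in\Omega^N$ with $c_i=b_{\sigma^\ast(i)}$, so that $T^N(c)=\eta$ and $|s(\rho)-c|^2=N\,d_2^2(\rho,\eta)$, and define
\[
\beta_{\rho,\eta}:=\frac1{N!}\sum_{\sigma\in\Sigma_N}\delta_{(\sigma\cdot s(\rho),\,\sigma\cdot c(\rho,\eta))}\in\mathcal{P}(\Omega^N\times\Omega^N),
\]
where $\Sigma_N$ acts diagonally by permuting coordinates. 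Its two marginals are $R(\delta_\rho)$ and $R(\delta_\eta)$ (each is symmetric with the right $T^N$-push-forward), and since the diagonal action is an isometry, $\int|x-y|^2\,d\beta_{\rho,\eta}=|s(\rho)-c|^2=N\,d_2^2(\rho,\eta)$. Then $\gamma:=\int_{\mathcal{K}^N\times\mathcal{K}^N}\beta_{\rho,\eta}\,d\Gamma(\rho,\eta)$ has marginals $\int R(\delta_\rho)\,d\hat\mu^N(\rho)=R(\hat\mu^N)=\mu^N$ and, likewise, $\nu^N$, so $\gamma\in\Pi(\mu^N,\nu^N)$ and
\[
d_2^2(\mu^N,\nu^N)\le\int|x-y|^2\,d\gamma=\int_{\mathcal{K}^N\times\mathcal{K}^N}N\,d_2^2(\rho,\eta)\,d\Gamma(\rho,\eta)=N\,\mathfrak{D}_2^2(\hat\mu^N,\hat\nu^N).
\]
Combined with the previous paragraph, this gives the claimed identity, and the ``scaled isometry'' assertion is just a reformulation of it.

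I expect the reverse inequality to be the main obstacle. The delicate points are, first, that the glued plan $\gamma$ must be checked to have exactly the \emph{symmetric} marginals $\mu^N$ and $\nu^N$ — precisely where the identification of symmetric measures with measures on $\mathcal{K}^N$, i.e. the operator $R$ and its compatibility with barycentres, is indispensable — and, second, that the lifting must be carried out measurably in $(\rho,\eta)$, which is the role of the Borel section $s$ and the measurable selection of optimal assignments $\sigma^\ast$. The assignment identity itself is classical (optimality of extreme points of the Birkhoff polytope) but should be recorded; everything else is bookkeeping with push-forwards.
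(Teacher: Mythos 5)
Your proposal is correct and follows essentially the same route as the paper: the easy inequality by pushing the optimal coupling of $\mu^N,\nu^N$ forward through $T^N\times T^N$ together with the assignment identity, and the reverse inequality by lifting the optimal plan on $\P(\P(\Omega))$ via a Borel section, symmetrizing over the permutation group to recover the correct symmetric marginals, and selecting optimal assignments to match the cost. Your fiberwise construction of $\beta_{\rho,\eta}$ followed by integration against $\Gamma$ is just a cleaner packaging of the paper's three-step modification $\pi_1\to\pi_2\to\pi_3$, and you are in fact more careful than the paper about the measurable-selection issues.
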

\begin{proof}[Proof of Lemma~\ref{lem:HaurayMischler}]
\textit{Step 1.} We start by showing that
\begin{equation}\label{eq:firstineq}
\mathfrak{D}_2^2(\hat{\mu}^N,\hat{\nu}^N)\le \frac{1}{N}d_2^2(\mu^N, \nu^N).
\end{equation}

\medskip

We consider $\pi_0\in \pi[\mu^N, \nu^N]$, the optimal pairing. By taking the push forward we have $(T_N\times T_N)\#\pi_0=\Pi_0\in \Pi[\hat{\mu}^N,\hat{\nu}^N]$, and thus
$$
\mathfrak{D}_2^2(\hat{\mu}^N,\hat{\nu}^N) \le \int_{\P(\Omega)\times \P(\Omega)} d_2^2(\rho_1,\rho_2)\;d\Pi_0(\rho_1,\rho_2)\,.
$$
Computing the right-hand side, we get
\begin{equation*}
\int_{\P(\Omega)\times \P(\Omega)} d_2^2(\rho_1,\rho_2)\;d\Pi_0(\rho_1,\rho_2)=\int_{\Omega^N\times \Omega^N} d_2^2\left(\frac{1}{N}\sum_{i=1}^N \delta_{x_i},\frac{1}{N}\sum_{i=1}^N \delta_{y_i}\right)\;d\pi_0(x,y)\,.
\end{equation*}
Combining the previous equation, with the identity
\begin{equation*}
d_2^2\left(\frac{1}{N}\sum_{i=1}^N \delta_{x_i},\frac{1}{N}\sum_{i=1}^N \delta_{y_i}\right)=\min_{\sigma\in \Sigma}\frac{1}{N}\sum_{i=1}^N|x_i-y_{\sigma(i)}|^2\qquad\mbox{for any $x,$ $y\in\Omega^N$},
\end{equation*}
we have the desired inequality
\begin{equation*}
\begin{array}{rl}
\ds\mathfrak{D}_2^2(\hat{\mu}^N,\hat{\nu}^N)&\ds\le \frac{1}{N}\int_{\Omega^N\times \Omega^N} \min_{\sigma\in \Sigma}\sum_{i=1}^N(x_i-y_{\sigma(i)})^2\;d\pi_0(x,y)\\
&\ds=\frac{1}{N}\int_{\Omega^N\times \Omega^N} |x-y|^2\;d\pi_0(x,y)\\
&\ds=\frac{1}{N}d_2^2(\mu^N, \nu^N),
\end{array}
\end{equation*}
by using the symmetry of $\nu^N$, which shows~\eqref{eq:firstineq}.

\medskip

\textit{Step 2.} We now show the reversed inequality
\begin{equation}\label{eq:secondineq}
\mathfrak{D}_2^2(\hat{\mu}^N,\hat{\nu}^N)\ge \frac{1}{N}d_2^2(\mu^N, \nu^N).
\end{equation}

\medskip

We take $\Pi_1\in\Pi[\hat{\mu}^N,\hat{\nu}^N]\subset \P(\mathcal{K}^N,\mathcal{K}^N)$ the optimal pairing. Using the inverse of $T^N$, $T^{N(-1)}:\mathcal{K}_N\to \Omega^N$, we notice that
\begin{equation*}
\pi_1=(T^{N(-1)}\times T^{N(-1)})\# \Pi_1 \in \pi[\mu^N, \nu^N],
\end{equation*}
is an admissible pairing. Moreover, we have the identity
\begin{equation}\label{eq:forD}
\begin{array}{rl}
\ds\mathfrak{D}_2^2(\hat{\mu}^N,\hat{\nu}^N)&\ds=\int_{\mathcal{K}^N\times \mathcal{K}^N} d_2^2(\rho_1,\rho_2)\;d\Pi_1(\rho_1,\rho_2)\ds=\frac{1}{N}\int_{\Omega^N\times \Omega^N} \min_{\sigma\in \Sigma}\sum_{i=1}^N(x_i-y_{\sigma(i)})^2\;d\pi_1(x,y). 
\end{array}
\end{equation}
In what follows, we massage $\pi_1$ to show the desired inequality.

First, we symmetrize $\pi_1$. Given $\sigma\in \Sigma$ a permutation, we consider the mapping $U_\sigma:\Omega^N\to \Omega^N$, by $U_\sigma(x_1,...,x_N)=(x_{\sigma(1)},x_{\sigma(2)},...,x_{\sigma(N)})$. By symmetry, we have $U_\sigma\#\mu^N=\mu^N$ and $U_\sigma\#\nu^N=\nu^N$ for any $\sigma\in \Sigma$. Therefore, $(U_\sigma\times U_\sigma)_ \#\pi_1\in \pi[\mu^N, \nu^N]$. Therefore,
\begin{equation}\label{eq:defpi2}
\pi_2=\frac{1}{N!}\sum_{\sigma\in \Sigma}(U_\sigma\times U_\sigma)_ \#\pi_1\in \pi[\mu^N, \nu^N]
\end{equation}
is an admissible pairing. Moreover, the identity for $\mathcal{D}_2^2(\hat{\mu}^N,\hat{\nu}^N)$\eqref{eq:forD} also holds replacing $\pi_1$ with $\pi_2$.

Next, we consider the set and the family of measures given by
\begin{equation}\label{eq:cxy}
\begin{array}{c}
\ds\mathcal{C}_{x,y}=\{z\in \Omega^N\;: \; \exists \sigma\in \Sigma \;s.t.\; U_\sigma(y)=z\;\mbox{and}\; |x-z|^2=\min_{\sigma\in \Sigma}|x-U_\sigma(y)|^2\},\\
\ds \rho_{x,y}=\frac{1}{\#(\mathcal{C}_{x,y})}\sum_{z\in C_{x,y}}\delta_{(x,z)}\in \P(\Omega^N\times \Omega^N),
\end{array}
\end{equation}
where $\#(\mathcal{C}_{x,y})$ is the number of elements of $\mathcal{C}_{x,y}$. We notice that $\rho_{x,y}:\Omega^N\times \Omega^N \to \P(\Omega^N\times \Omega^N)$ is a Borel mapping. Hence, we can define
\begin{equation}\label{eq:defpi3}
\pi_3= \int_{\Omega^N\times \Omega^N} \rho_{x,y}\;d\pi_2(x,y),
\end{equation}
or alternatively, by duality, for $\psi\in C_b(\Omega^N\times \Omega^N)$
\begin{equation*}
\int_{\Omega^N\times \Omega^N} \psi(x,y)\; d\pi_3(x,y)=\int_{\Omega^N\times \Omega^N} \left(\frac{1}{ \#(\mathcal{C}_{x,y})}\sum_{z\in \mathcal{C}_{x,y}}\psi(x,z)\right)\; d\pi_2(x,y).
\end{equation*}
We now show that $\pi_3\in \pi[\mu^N, \nu^N]$ is an admissible transference plan. Taking $\vphi\in C_b(\Omega^N)$, we have
\begin{equation*}
\begin{array}{rl}
\ds\int_{\Omega^N\times \Omega^N} \vphi(x)\; d\pi_3(x,y)
&\ds=\int_{\Omega^N\times \Omega^N} \left(\frac{1}{ \#(\mathcal{C}_{x,y})}\sum_{z\in \mathcal{C}_{x,y}}\vphi(x)\right)\; d\pi_2(x,y)\\
&\ds=\int_{\Omega^N\times \Omega^N} \vphi(x)\; d\pi_2(x,y)\\
&\ds=\int_{\Omega^N}\vphi(x)\; d\mu^N(x),
\end{array}
\end{equation*}
which shows the first marginal. For the second marginal, we use the definition of $\pi_2$ \eqref{eq:defpi2} to obtain.
\begin{equation}\label{2marginal1}
\begin{array}{rl}
\ds\int_{\Omega^N\times \Omega^N} \vphi(y)\; d\pi_3(x,y)
&\ds=\int_{\Omega^N\times \Omega^N} \left(\frac{1}{ \#(\mathcal{C}_{x,y})}\sum_{z\in \mathcal{C}_{x,y}}\vphi(z)\right)\; d\pi_2(x,y)\\
&\ds=\int_{\Omega^N\times \Omega^N}\frac{1}{N!} \sum_{\sigma \in \Sigma}\left(\frac{1}{ \#(\mathcal{C}_{U_\sigma(x),U_\sigma(y)})}\sum_{z\in \mathcal{C}_{U_\sigma(x),U_\sigma(y)}}\vphi(z)\right)\; d\pi_1(x,y).
\end{array}
\end{equation}
From the definition of $\mathcal{C}_{x,y}$ we observe that given $\sigma\in \Sigma$ we have the following property $U_\sigma(z)\in \mathcal{C}_{U_\sigma(x),U_\sigma(y)}$, if and only if $z\in \mathcal{C}_{x,y}$. Hence,
\begin{equation}\label{2marginal2}
\begin{array}{c}
\ds\int_{\Omega^N\times \Omega^N}\frac{1}{N!} \sum_{\sigma \in \Sigma}\left(\frac{1}{ \#(\mathcal{C}_{U_\sigma(x),U_\sigma(y)})}\sum_{z\in \mathcal{C}_{U_\sigma(x),U_\sigma(y)}}\vphi(z)\right)\; d\pi_1(x,y)\\
=\\
\ds\int_{\Omega^N\times \Omega^N}\frac{1}{N!} \sum_{\sigma \in \Sigma}\left(\frac{1}{ \#(\mathcal{C}_{x,y})}\sum_{z\in \mathcal{C}_{x,y}}\vphi(U_{\sigma^{-1}}(z))\right)\; d\pi_1(x,y).
\end{array}
\end{equation}
Defining the symmetrization $\tilde{\vphi}(z)=\frac{1}{N!}\sum_{\sigma \in \Sigma}\vphi(U_{\sigma^{-1}}(z))$, we obtain the identities
\begin{equation}\label{2marginal3}
\frac{1}{N!} \sum_{\sigma \in \Sigma}\left(\frac{1}{ \#(\mathcal{C}_{x,y})}\sum_{z\in \mathcal{C}_{x,y}}\vphi(U_{\sigma^{-1}}(z))\right)=\frac{1}{ \#(\mathcal{C}_{x,y})}\sum_{z\in \mathcal{C}_{x,y}}\tilde{\vphi}(z)=\tilde{\vphi}(y)\,,
\end{equation}
where we have used that $\tilde{\vphi}(z)=\tilde{\vphi}(y)$ for all $z\in \mathcal{C}_{x,y}$ since $U_{\sigma}(y)=z$ by the definition of $ \mathcal{C}_{x,y}$, or in other words, the symmetry of $\tilde\vphi$ under permutations.

Putting \eqref{2marginal1}, \eqref{2marginal2} and \eqref{2marginal3} together and using that the second marginal of $\pi_1$ is $\nu^N$, we obtain 
\begin{equation*}
\int_{\Omega^N\times \Omega^N} \vphi(y)\; d\pi_3(x,y)=\int_{\Omega^N\times \Omega^N} \tilde{\vphi}(y)\; d\pi_1(x,y)=\int_{\Omega^N} \tilde{\vphi}(y)\; d\nu^N(y)=\int_{\Omega^N} \vphi(y)\; d\nu^N(y),
\end{equation*}
where the last identity follows from the symmetry of $\nu^N$. Hence, $\pi_3\in \pi[\mu^N, \nu^N]$ is an admissible pairing.

Using that $\pi_3\in \pi[\mu^N, \nu^N]$, its definition \eqref{eq:defpi3}, the definition of $ \mathcal{C}_{x,y}$ in \eqref{eq:cxy} and \eqref{eq:forD}, we have
\begin{equation*}
\begin{array}{rl}
	\ds\frac{1}{N}d_2^2(\mu^N,\nu^N)
    &\ds\le \frac{1}{N}\int_{\Omega^N\times\Omega^N} |x-y|^2\;d\pi_3(x,y)\\
    &\ds=\frac{1}{N}\int_{\Omega^N\times \Omega^N} \min_{\sigma\in \Sigma}\sum_{i=1}^N|x_i-y_{\sigma(i)}|^2\;d\pi_1(x,y)\\
    &\ds=\mathfrak{D}_2^2(\hat{\mu}^N,\hat{\nu}^N),
\end{array}
\end{equation*}
which shows the desired \eqref{eq:secondineq} and concludes the proof.
\end{proof}
To prove Lemma~\ref{lem:characterizations}, we need the following natural observation.
\begin{lemma}\label{lem:convergence}
Given $X\in\mathcal{P}_2(\mathcal{P}_2(\Omega))$, then
$$
\lim_{N\to \infty} \mathfrak{D}_2^2(\widehat{X^N},X)=0,
$$
where $\widehat{X^N}=T^N\#X^N$.
\end{lemma}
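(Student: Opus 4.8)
The plan is to verify the two criteria of Theorem~\ref{thm:basicproperties} for the sequence $\{\widehat{X^N}\}$ regarded in $\mathcal{P}_2(\mathcal{P}_2(\Omega))$: that $\widehat{X^N}\rightharpoonup X$ narrowly and that the second moments converge. The moment condition is the easy half and in fact holds with equality: fixing a base point $x_0\in\Omega$, using $d_2^2(\sigma,\delta_{x_0})=\int_\Omega|x-x_0|^2\,d\sigma(x)$, the definition of $X^N$, and $\widehat{X^N}=T^N\#X^N$, one computes
\[
\int_{\P(\Omega)}d_2^2(\sigma,\delta_{x_0})\,d\widehat{X^N}(\sigma)=\int_{\P(\Omega)}\int_{\Omega^N}\frac1N\sum_{i=1}^N|x_i-x_0|^2\,d\rho^{\otimes N}(x)\,dX(\rho)=\int_{\P(\Omega)}d_2^2(\rho,\delta_{x_0})\,dX(\rho),
\]
which is finite because $X\in\mathcal{P}_2(\mathcal{P}_2(\Omega))$ — so $\widehat{X^N}\in\mathcal{P}_2(\mathcal{P}_2(\Omega))$ as well — and is independent of $N$.

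For the narrow convergence I would test against the monomials $M_{k,\vphi}(\rho)=\int_{\Omega^k}\vphi\,d\rho^{\otimes k}$ (with $k\in\N$ and $\vphi\in C_b(\Omega^k)$), which lie in $C_b(\mathcal{P}_2(\Omega))$ as recalled in the proof of Lemma~\ref{lem:moments}. Unwinding the definitions, $\langle M_{k,\vphi},\widehat{X^N}\rangle=\int_{\P(\Omega)}\langle\vphi,(\widehat{\rho^{\otimes N}})^k\rangle\,dX(\rho)$; since the $k$-th marginal of $\rho^{\otimes N}$ is exactly $\rho^{\otimes k}$, the Diaconis--Freedman estimate of Lemma~\ref{lem:DiaconisFreedman} gives $|\langle\vphi,(\widehat{\rho^{\otimes N}})^k\rangle-\langle\vphi,\rho^{\otimes k}\rangle|\le\|\vphi\|_\infty\,\frac{2k(k-1)}{N}$ for $N>k$, uniformly in $\rho$, so dominated convergence in $\rho$ (the integrands are bounded by $\|\vphi\|_\infty$) yields $\langle M_{k,\vphi},\widehat{X^N}\rangle\to\int_{\P(\Omega)}M_{k,\vphi}(\rho)\,dX(\rho)=\langle M_{k,\vphi},X\rangle$. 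Next, the uniform second-moment bound obtained above together with the Prohorov-type criterion stated in this section shows that $\{\widehat{X^N}\}$ is relatively compact for narrow convergence; any narrow cluster point then agrees with $X$ on every $M_{k,\vphi}$, hence has the same moments $\int\rho^{\otimes k}\,d(\cdot)$ as $X$, hence equals $X$ by Lemma~\ref{lem:moments}. A relatively compact sequence with a unique cluster point converges, so $\widehat{X^N}\rightharpoonup X$, and Theorem~\ref{thm:basicproperties} then upgrades this to $\mathfrak{D}_2^2(\widehat{X^N},X)\to0$.

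The step I expect to be delicate is the passage from convergence on the monomials to genuine narrow convergence: the monomials are not dense in $C_b(\mathcal{P}_2(\Omega))$ for the sup norm, so one cannot conclude directly, and it is precisely the tightness supplied by the uniform second-moment bound (via Prohorov) that, combined with the measure-determining property of the monomials (Lemma~\ref{lem:moments}), closes the gap. An alternative, more probabilistic route bypasses Lemma~\ref{lem:moments} altogether: one couples $\widehat{X^N}$ with $X$ by sampling $\rho\sim X$ and then $x_1,\dots,x_N$ i.i.d.\ from $\rho$ and pairing $\frac1N\sum_i\delta_{x_i}$ with $\rho$, which bounds $\mathfrak{D}_2^2(\widehat{X^N},X)$ by $\int_{\P(\Omega)}\big(\int_{\Omega^N}d_2^2(\frac1N\sum_i\delta_{x_i},\rho)\,d\rho^{\otimes N}(x)\big)\,dX(\rho)$; the inner integral tends to $0$ for each $\rho\in\mathcal{P}_2(\Omega)$ by the law of large numbers for empirical measures in $2$-Wasserstein distance (a.s.\ narrow convergence plus a.s.\ convergence of second moments, which via Theorem~\ref{thm:basicproperties} gives a.s.\ $d_2$-convergence, upgraded to convergence in mean using the domination $d_2^2(\frac1N\sum_i\delta_{x_i},\rho)\le 2(\frac1N\sum_i|x_i-x_0|^2+d_2^2(\rho,\delta_{x_0}))$ together with the $L^1$-convergence of its right-hand side), and the outer integral is then handled by dominated convergence since $\int_{\Omega^N}d_2^2(\frac1N\sum_i\delta_{x_i},\rho)\,d\rho^{\otimes N}\le 4\,d_2^2(\rho,\delta_{x_0})\in L^1(X)$.
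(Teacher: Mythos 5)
Your main argument is correct and follows essentially the same route as the paper: the second moment of $\widehat{X^N}$ is computed to equal that of $X$ for every $N$, narrow cluster points (which exist by tightness) are identified with $X$ by testing against the monomials via the Diaconis--Freedman estimate and invoking Lemma~\ref{lem:moments}, and Theorem~\ref{thm:basicproperties} then upgrades narrow convergence plus moment convergence to $\mathfrak{D}_2$-convergence. Your alternative coupling/law-of-large-numbers argument is a genuinely different and in fact more direct route, but the first argument is the one the paper uses, and you handle the non-density of the monomials in $C_b(\mathcal{P}(\Omega))$ more explicitly than the paper does.
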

\begin{proof}
By the separability of the metric space $(\P(\Omega),d_2)$ we have compactness of measures. Therefore, for every sequence $N_i$ there exists a further subsequence (not relabeled) and a positive measure $Y\in\P(\P(\Omega))$ such that 
$$
\widehat{X^N}\rightharpoonup Y.
$$ 
Using Lemma~\ref{lem:characterizations}, we characterize $Y=X$ by showing the equality for the marginals. Given a smooth function $\vphi:\Omega^n\to \R$, we consider the action of the monomial $M_{n,\phi}$ on the sequence to obtain
$$
\begin{array}{rcl}
\ds\lim_{N\to\infty}\int_{\Omega^n} \vphi \;d (\widehat{X^N})_n&=&\ds \lim_{N\to\infty}\int_{\mathcal{P}(\Omega)} \langle\vphi,\rho^{\otimes n}\rangle_{C(\Omega^n),\P(\Omega^n)}\;d \widehat{X^N}(\rho),\\
\ds\int_{\Omega^n} \vphi\; d X^n &=&\ds\int_{\mathcal{P}(\Omega)} \langle\vphi,\rho^{\otimes n}\rangle_{C(\Omega^n),\P(\Omega^n)}\; d Y(\rho),
\end{array}
$$
where we have used the Diaconis-Freedman Lemma~\ref{lem:DiaconisFreedman} for the equality in the left hand side. So we can conclude that the full sequence $\widehat{X^N}\rightharpoonup X$. 

In the case we are working with a compact set $\Omega$, this is equivalent to showing that
$$
\lim_{N\to\infty}\mathfrak{D}_2^2(\widehat{X^N},X)=0.
$$
In the case $\Omega$ is unbounded, we also need to show that the second moment of the sequence converges. This follows from the following computation: for every $N\in \N$,
$$
\int_{\P(\Omega)} d_2^2(\rho,\delta_0)\; d\widehat{X^N}(\rho)=\int_{\Omega^N} \frac{|x|^2}{N}\; dX^N(x)=\int_{\Omega^N} |x_1|^2\; dX^1(x_1)=\int_{\P(\Omega)} d_2^2(\rho,\delta_0)\; dX(\rho).
$$
\end{proof}

\begin{proof}[Proof of Lemma~\ref{lem:characterizations}]
\textit{Step 1.} We show that (i) implies (ii).

\medskip

We take $\Pi\in \P(\Omega^N\times\Omega^N)$ the optimal pairing between $\mu^N$ and $X^N$. Fixing $n<N$ and denoting $\left\lfloor\frac{N}{n}\right\rfloor$ the integer part of $N/n$, we have by symmetry
\begin{equation*}
\int_{\Omega^N\times \Omega^N}                     |x-y|^2\;d\Pi=\left\lfloor\frac{N}{n}\right\rfloor\int_{\Omega^n\times \Omega^n} |\tilde{x}-\tilde{y}|^2\;d\Pi_n+\left(N-\left\lfloor\frac{N}{n}\right\rfloor\right)\int_{\Omega\times \Omega} |x_1-y_1|^2\;d\Pi_1,
\end{equation*}
where $\Pi_n\in \P(\Omega^n\times\Omega^n)$ and $\Pi_1\in\P(\Omega,\Omega)$ are the projections onto $n+n$ and $1+1$ variables, respectively. Using that $\Pi_n$ is an admissible pairing between $\mu_n^N$ and $\nu_n^N$, we obtain
\begin{equation*}
\frac{1}{N}d_2^2(\mu^N,X^N)\ge \frac{1}{N}\left\lfloor\frac{N}{n}\right\rfloor d_2^2(\mu_n^N,X^N_n)+\frac{N-\left\lfloor\frac{N}{n}\right\rfloor}{N}d_2^2(\mu^N_1,X^1).
\end{equation*}
Noticing that $X^N_n=X^n$, taking limits and using (i), we obtain that for every $n\in\N$, 
\begin{equation*}
0=\lim_{N\to\infty}\frac{1}{N}d_2^2(\mu^N,X^N)\ge \frac{1}{n}\lim_{N\to\infty}d_2^2(\mu^N_n, X^n)\ge0,
\end{equation*}
which implies (ii).

\medskip

\textit{Step 2.} We show that (ii) implies (iii).

\medskip

By the separability of the metric space $(\P_2(\Omega),d_2)$ we have the compactness of measures with finite mass. Therefore, for every subsequence $N_i\to\infty$, there exists a further subsequence (which we do not relabel) and $Y\in \mathcal{P}_2(\P_2(\Omega))$ such that
\begin{equation*}
\hat{\mu}^{N_i}\rightharpoonup Y.
\end{equation*}
We show that, independently of the subsequence, $Y=X$. We notice by the Diaconis-Freedman Lemma~\ref{lem:DiaconisFreedman} that for any $n\le N$,
\begin{equation*}
\|\mu^{N_i}_n-(\hat{\mu}^{N_i})^{n}\|_{TV}\le 2\frac{n(n-1)}{N}.
\end{equation*}
In particular, this implies that for every $n\in \N$,
$$
(\hat{\mu}^{N_i})^{n}\rightharpoonup X^n,
$$
and it follows that $Y=X$ by Lemma~\ref{lem:moments}.

To show the convergence of the metric $\mathfrak{D}_2$, we need to show that the second moment is also converging. To show this, we first notice that
$$
\int_{\P(\Omega)}d^2_2(\rho,\delta_0)\;d\hat{\mu}^{N}(\rho)=\int_{\Omega^N}\frac{|x|^2}{N}\;d\mu^N(x)=\int_{\Omega^N}|x_1|^2\;d\mu^N_1(x).
$$
Using the hypothesis (ii), we have the desired convergence
$$
\lim_{N\to\infty}\int_{\Omega^N}|x_1|^2\;d\mu^N_1(x)=\int_{\P(\Omega)}\left(\int_{\Omega}|x_1|^2\;d\rho(x)\right)\; dX(\rho)=\int_{\P(\Omega)}d_2^2(\rho,\delta_0)\; dX(\rho).
$$

\medskip

\textit{Step 3.} We show that (iii) implies (i).

\medskip

By Lemma~\ref{lem:HaurayMischler} and the triangle inequality we have
$$
\frac{1}{N}d_2^2(\mu^N, X^N)=\mathfrak{D}_2^2(\hat{\mu}^N,\widehat{X^N})\le2 \mathfrak{D}_2^2(\hat{\mu}^N,X)+2\mathfrak{D}_2^2(\widehat{X^N},X).
$$
The result follows by taking limits applying Lemma~\ref{lem:convergence} and using the hypothesis (iii).
\end{proof}
%
%
\subsection{Compactness of curves in $\P_{sym}(\Omega^N)$}

We show now a compactness result that will be useful for passing to the limit of solutions of the gradient flow.

\begin{lemma}[Compactness of $H^1$ curves]\label{lem:compactness1}
We fix $T>0$. Let $\{\mu^N(\cdot)\}_{N\in \N}$ be a family of curves such that for every $\mu_N:[0,T]\to \P_{sym}(\Omega^N)$, 
\begin{equation}\label{assumption H^1}
\sup_{N\in \N}\frac{1}{N}\int_0^T |\dot{\mu}^N|^2<\infty,
\end{equation}
holds, being $|\dot{\mu}^N|$ the metric derivative with respect to $d_2$ of $\mathcal{P}(\Omega^N)$.
Then, for every subsequence $N_i$, there exists a further subsequence $N_{i_j}$ and a curve $X:[0,T]\to \P(\P(\Omega))$ such that $\mu^{N_{i_j}}(\cdot)\to X(\cdot)$ uniformly in time in the sense of Definition~\ref{def:convergence1}. More precisely,
\begin{equation*}
\lim_{j\to \infty}\sup_{t\in[0,T]}\frac{1}{N_{i_j}}d_2^2(\mu^{N_{i_j}}(t), X^{ N_{i_j}}(t))=0,
\end{equation*}
and
\begin{equation*}
\liminf_{j\to \infty}\frac{1}{N_{i_j}}\int_0^T |\dot{\mu}^{N_{i_j}}(s)|^2\;ds\ge \int_0^T |\dot{X}(s)|^2\;ds,
\end{equation*}
where the metric derivative on the right hand side is with respect to $\mathfrak{D}_2^2$, the 2-Wasserstein distance on the probability measures of the metric space $(\mathcal{P}(\Omega),d_2)$.
\end{lemma}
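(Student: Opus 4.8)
The plan is to transport the whole problem, via the scaled isometry $T^N$ of Lemma~\ref{lem:HaurayMischler}, into the single fixed complete metric space $(\mathcal{P}(\mathcal{P}(\Omega)),\mathfrak{D}_2)$, where it becomes a compactness statement for absolutely continuous curves to which the metric Arzel\`a--Ascoli theorem applies, and then to transfer the conclusion back. First I would set $\hat\mu^N(t)=T^N_\#\mu^N(t)$. Since $T^N$ scales distances by $N^{-1/2}$, Lemma~\ref{lem:HaurayMischler} gives $\mathfrak{D}_2(\hat\mu^N(s),\hat\mu^N(t))=N^{-1/2}d_2(\mu^N(s),\mu^N(t))$, so each $\hat\mu^N$ is absolutely continuous with $|\dot{\hat\mu}^N|=N^{-1/2}|\dot\mu^N|$ a.e., hence
\[
\int_0^T |\dot{\hat\mu}^N(s)|^2\,ds=\frac1N\int_0^T|\dot\mu^N(s)|^2\,ds ,
\]
which is bounded uniformly in $N$ by \eqref{assumption H^1}; by Cauchy--Schwarz the family $\{\hat\mu^N\}$ is then uniformly $\tfrac12$-H\"older in time.

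Next I would apply the metric compactness theorem for curves in a complete metric space in the form of \cite[Proposition 3.3.1]{ambrosio2008gradient}. Besides the uniform action bound and the equicontinuity just obtained, this requires relative compactness of $\{\hat\mu^N(t)\}_N$ in $(\mathcal{P}(\mathcal{P}(\Omega)),\mathfrak{D}_2)$ for each fixed $t$; by the uniform H\"older bound it suffices to have it at $t=0$, and there it follows from a uniform bound on $\sup_N\int_{\mathcal{P}(\Omega)}d_2^2(\rho,\delta_0)\,d\hat\mu^N(0)(\rho)=\sup_N\tfrac1N\int_{\Omega^N}|x|^2\,d\mu^N(0)(x)$, i.e.\ a uniform second-moment bound on the initial data, which is available in every situation where this lemma is invoked. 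Granting this, the proposition yields a subsequence $N_{i_j}$ and a curve $X:[0,T]\to\mathcal{P}(\mathcal{P}(\Omega))$ with $\hat\mu^{N_{i_j}}(t)\to X(t)$ in $\mathfrak{D}_2$ for every $t$ --- uniformly in $t$, by the equi-H\"older bound --- and with $X$ absolutely continuous and
\[
\int_0^T|\dot X(s)|^2\,ds\le\liminf_{j\to\infty}\int_0^T|\dot{\hat\mu}^{N_{i_j}}(s)|^2\,ds=\liminf_{j\to\infty}\frac1{N_{i_j}}\int_0^T|\dot\mu^{N_{i_j}}(s)|^2\,ds ,
\]
which is the second assertion.

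It then remains to transfer back. Using Lemma~\ref{lem:HaurayMischler} once more and the triangle inequality in $\mathfrak{D}_2$,
\[
\frac1N d_2^2(\mu^N(t),X^N(t))=\mathfrak{D}_2^2\bigl(\hat\mu^N(t),\widehat{X^N}(t)\bigr)\le 2\,\mathfrak{D}_2^2\bigl(\hat\mu^N(t),X(t)\bigr)+2\,\mathfrak{D}_2^2\bigl(\widehat{X^N}(t),X(t)\bigr).
\]
Along $N_{i_j}$ the first term tends to $0$ uniformly in $t$ by the previous paragraph, and for each fixed $t$ the second term tends to $0$ by Lemma~\ref{lem:convergence} applied to $X(t)$. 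To make the latter uniform in $t$, I would use the tensorization estimate $\tfrac1N d_2^2(X^N(s),X^N(t))\le\mathfrak{D}_2^2(X(s),X(t))$ (obtained by tensorizing an optimal plan on $\mathcal{P}(\Omega)\times\mathcal{P}(\Omega)$, together with Lemma~\ref{lem:HaurayMischler}), which, combined with the $\tfrac12$-H\"older continuity of $X$, shows the family $\{t\mapsto\widehat{X^N}(t)\}_N$ is equicontinuous; a standard $\varepsilon/3$ argument over a finite partition of $[0,T]$ then upgrades the pointwise convergence to uniform convergence. Hence $\sup_{t\in[0,T]}\tfrac1{N_{i_j}}d_2^2(\mu^{N_{i_j}}(t),X^{N_{i_j}}(t))\to0$, which is the first assertion.

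The two genuinely delicate points are the pointwise-in-$t$ relative compactness in $\mathfrak{D}_2$ --- this is where the uniform second-moment input must be fed in, and without it the limit curve need not even lie in $\mathcal{P}_2(\mathcal{P}_2(\Omega))$ --- and the upgrading of the various pointwise-in-$t$ convergences to uniform-in-$t$ ones, which is handled each time by an equicontinuity estimate: of $\{\hat\mu^N(\cdot)\}$ for the first term above, and of $\{\widehat{X^N}(\cdot)\}$ for the second, the latter resting on the tensorization inequality $\tfrac1N d_2^2(X^N,Y^N)\le\mathfrak{D}_2^2(X,Y)$. Once Lemma~\ref{lem:HaurayMischler}, Lemma~\ref{lem:convergence} and \cite[Proposition 3.3.1]{ambrosio2008gradient} are in hand, what remains is bookkeeping.
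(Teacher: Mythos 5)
Your proof follows essentially the same route as the paper's: the scaled isometry of Lemma~\ref{lem:HaurayMischler} converts hypothesis \eqref{assumption H^1} into a uniform $C^{1/2}$ bound on $\hat{\mu}^N$ in $(\mathcal{P}(\mathcal{P}(\Omega)),\mathfrak{D}_2)$, Arzel\`a--Ascoli produces the limit curve, and the action bound passes to the limit by lower semicontinuity (the paper does this step by hand, writing the action as a supremum over $h$ of difference quotients and applying Fatou, rather than citing \cite[Prop.~3.3.1]{ambrosio2008gradient} as a package). You are in fact more explicit than the paper on two points it leaves implicit --- the pointwise-in-$t$ relative compactness needed for Arzel\`a--Ascoli, which indeed requires a uniform second-moment bound available only from the context in which the lemma is invoked, and the uniformity in $t$ of $\mathfrak{D}_2(\widehat{X^N}(t),X(t))\to 0$ via the tensorization estimate $\tfrac1N d_2^2(X^N(s),X^N(t))\le\mathfrak{D}_2^2(X(s),X(t))$ --- and both of these patches are correct.
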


\begin{proof}[Proof of Lemma~\ref{lem:compactness1}]
By Lemma~\ref{lem:HaurayMischler} and our assumption \eqref{assumption H^1}, the family $\{\hat{\mu}^{N}\}_{N\in\N}\subset \mathcal{P}(\mathcal{P}(\Omega))$ is uniformly bounded in $C^{1/2}([0,T];\mathcal{P}(\mathcal{P}(\Omega)))$ with respect to the metric $\mathfrak{D}_2$. By Arzela-Ascoli the family $\{\hat{\mu}^{N}\}_{N\in\N}$ is relatively compact. Hence, the existence of a curve $X:[0,T]\to \P(\P(\Omega))$ with the convergence up to a subsequence follows from the previous characterizations~Lemma~\ref{lem:characterizations}. For notational convenience, we forgo the subsequence notation.

Using that $\P(\P(\Omega))$ is a complete metric space, we can characterize the $H^1$ norm by the following supremum,
\begin{equation}\label{eq:H11}
\frac{1}{N}\int_0^T |\dot{\mu}^N|^2=\int_0^T |\dot{\hat{\mu}}^N|^2=\sup_{0<h<T}\int_0^{T-h} \frac{\mathfrak{D}_2^2(\hat{\mu}^N(s+h),\hat{\mu}^N(s))}{h^2}\;ds.
\end{equation}
Using the uniform convergence and Fatou's Lemma, we obtain that for any $0<h<T$,
\begin{equation}\label{eq:H12}
\liminf_{N\to\infty}\int_0^{T-h} \frac{\mathfrak{D}_2^2(\hat{\mu}^N(s+h),\hat{\mu}^N(s))}{h^2} \;ds\ge \int_0^{T-h} \frac{\mathfrak{D}_2^2(X(s+h),X(s))}{h^2}\;ds.
\end{equation}
Putting \eqref{eq:H11}, \eqref{eq:H12} together and taking the supremum over $h$ we obtain 
\begin{equation*}
\liminf_{N\to\infty}\frac{1}{N}\int_0^T|\dot{\mu}^N|^2\ge \sup_{0<h<T}\int_0^{T-h} \frac{\mathfrak{D}_2^2(X(s+h),X(s))}{h^2}\;ds=\int_0^T |\dot{X}|^2,
\end{equation*}
which is the desired lower-semicontinuity.
\end{proof}

Finally, we reinterpret, using the characterization of the metric that was presented in Section~\ref{subsec:charact}, the convergence of sequences in $\mathcal{P}_{sym}(\Omega^N)$ in terms of the 2-Wasserstein distance in $\mathcal{P}(\mathcal{P}(\Omega))$.

\begin{lemma}\label{lem:limitmetric}
Given two sequences of symmetric probability measures $\{\mu^N\}_{N\in\N}$ and $\{\nu^N\}_{N\in\N}$ such that $\mu^N,$ $\nu^N\in \mathcal{P}_{sym}(\Omega^N)$, if $\mu^N\to X$ and $\nu^N\to Y$ in the sense of Definition~\ref{def:convergence1}, then we have
\begin{equation*}
    \lim_{N\to\infty} \frac{1}{N} d_2^2(\mu^N,\nu^N)=\mathfrak{D}_2^2(X,Y),
\end{equation*}
where $\mathfrak{D}_2$ is the 2-Wasserstein distance on the probability measures of the metric space $(\mathcal{P}(\Omega),d_2)$.
In particular, we have
\begin{equation*}
    \lim_{N\to\infty} \frac{1}{N} d_2^2(\mu^N,Y^N)=\mathfrak{D}_2^2(X,Y),
\end{equation*}
where
\begin{equation*}
    Y^N=\int_{\mathcal{P}(\Omega)}\rho^{\otimes N}\;dY(\rho)\in \mathcal{P}_{Sym}(\Omega^N).
\end{equation*}
\end{lemma}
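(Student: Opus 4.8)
The plan is to reduce the statement to the scaled isometry of Lemma~\ref{lem:HaurayMischler} and the equivalence of notions of convergence in Lemma~\ref{lem:characterizations}, after which nothing more than the triangle inequality in the Polish space $(\mathcal{P}(\mathcal{P}(\Omega)),\mathfrak{D}_2)$ is needed.

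First I would use Lemma~\ref{lem:HaurayMischler} to write, for every $N\in\N$,
\[
\frac{1}{N}d_2^2(\mu^N,\nu^N)=\mathfrak{D}_2^2(\hat\mu^N,\hat\nu^N),
\]
so that it suffices to show $\mathfrak{D}_2(\hat\mu^N,\hat\nu^N)\to\mathfrak{D}_2(X,Y)$. By the equivalence (i)$\Leftrightarrow$(iii) in Lemma~\ref{lem:characterizations}, the hypotheses $\mu^N\to X$ and $\nu^N\to Y$ in the sense of Definition~\ref{def:convergence1} are precisely $\mathfrak{D}_2^2(\hat\mu^N,X)\to0$ and $\mathfrak{D}_2^2(\hat\nu^N,Y)\to0$. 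The triangle inequality for $\mathfrak{D}_2$ then gives
\[
\bigl|\mathfrak{D}_2(\hat\mu^N,\hat\nu^N)-\mathfrak{D}_2(X,Y)\bigr|\le\mathfrak{D}_2(\hat\mu^N,X)+\mathfrak{D}_2(\hat\nu^N,Y)\longrightarrow0,
\]
and squaring, together with the identity above, proves the first claim.

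For the last assertion I would take $\nu^N:=Y^N$. In the notation of \eqref{aux}, applied with $X$ replaced by $Y$, the symmetric measure built from $Y$ on $\Omega^N$ is exactly $Y^N$, so $\frac{1}{N}d_2^2(\nu^N,Y^N)=\frac{1}{N}d_2^2(Y^N,Y^N)=0$ for all $N$; hence $\nu^N\to Y$ in the sense of Definition~\ref{def:convergence1} (alternatively, one may invoke Lemma~\ref{lem:convergence}). Applying the first part with this choice of $\nu^N$ yields $\lim_{N\to\infty}\frac{1}{N}d_2^2(\mu^N,Y^N)=\mathfrak{D}_2^2(X,Y)$.

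The argument is essentially bookkeeping and I do not expect a genuine obstacle; the only point requiring care is that all the measures involved have finite second moments, so that $d_2$ and $\mathfrak{D}_2$ are finite and the quoted lemmas apply. This is guaranteed by the convention fixed in Section~\ref{sec:prelim} that $\mathcal{P}(\mathcal{P}(\Omega))$ abbreviates $\mathcal{P}_2(\mathcal{P}_2(\Omega))$, combined with the identity $\int_{\mathcal{P}(\Omega)}d_2^2(\rho,\delta_0)\,d\hat\mu^N(\rho)=\int_{\Omega^N}\frac{|x|^2}{N}\,d\mu^N(x)$ already used in the proof of Lemma~\ref{lem:characterizations}, which transfers the second moment bound implicit in $\mu^N\to X$ to $\hat\mu^N$.
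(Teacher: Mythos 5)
Your proof is correct and follows exactly the route the paper takes: the paper's own proof is the one-line remark that the result ``follows from part (iii) of Lemma~\ref{lem:characterizations}, together with the isometry property from Lemma~\ref{lem:HaurayMischler}'', and your argument simply spells out the triangle-inequality bookkeeping behind that remark. The observation that the second claim is the special case $\nu^N=Y^N$ (for which convergence to $Y$ in the sense of Definition~\ref{def:convergence1} is immediate) is likewise the intended reading.
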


\begin{proof}
This follows from part $(iii)$ of Lemma~\ref{lem:characterizations}, together with the isometry property from Lemma~\ref{lem:HaurayMischler}.
\end{proof}


\section{$\Gamma$-Convergence of the Free Energy Functional}\label{sec:gammaconvergence}
In this section we prove $\Gamma$-convergence of the free energy functional for the $N-$particle system, in the spirit of the proof Messer-Spohn~\cite{messer1982statistical}, see also~\cite{Kiessling1993}. We follow the more recent proof by Rougerie~\cite[Chapter 2]{rougerie2015finetti}.

Let us first define the auxiliary functional $\mathcal{F}^\infty:\mathcal{P}(\mathcal{P}(\Omega))\to \R$ given by
$$
\mathcal{F}^\infty[X]=\int_{\mathcal{P}(\Omega)} \mathcal{F}^{MF}[\rho]\;dX(\rho),
$$
with $\mathcal{F}^{MF}:\mathcal{P}(\Omega)\to \R$ given by
$$
\mathcal{F}^{MF}[\rho]=\int_{\Omega} V(x)\;d\rho(x)+\frac{1}{2}\int_{\Omega}\int_{\Omega} H(x-y)\;d\rho(x)d\rho(y)+\int_{\Omega}\log(\rho(x))\;\rho(x).
$$

The objective of this section is to show the following $\Gamma$-convergence result.

\begin{lemma}\label{lem:gconv}
Given a sequence of symmetric probability measures $\{\mu^N\}_{N\in\N}$ such that $\mu^N\in \mathcal{P}_{sym}(\Omega^N)$, assume that there exists $X\in\mathcal{P}(\mathcal{P}(\Omega))$, such that $\mu^N\to X$ in the sense of Definition~\ref{def:convergence1}. Then we have:
    $$    \liminf_{N\to\infty}\frac{\mathcal{F}^N}{N}[\mu^N]\ge \mathcal{F}^\infty(X).$$
    Moreover, given $Y\in \mathcal{P}(\mathcal{P}(\Omega))$, we have
    $$
        \lim_{N\to\infty}\frac{\mathcal{F}^N}{N}[Y^N]=\mathcal{F}^\infty(Y),
    $$
    where 
    $$
        Y^N=\int_{\mathcal{P}(\Omega)}\rho^{\otimes N}\;dY(\rho)\in \mathcal{P}_{Sym}(\Omega^N).
    $$
\end{lemma}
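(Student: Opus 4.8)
The plan is to split the free energy into its potential part and its entropy part and handle each by its own mechanism. Write $\frac{1}{N}\mathcal{F}^N[\mu^N] = \frac{1}{N}\int_{\Omega^N} W^N\,d\mu^N + \frac{1}{N}\int_{\Omega^N}\mu^N\log\mu^N\,dx$. For the potential term, the key observation is that $\frac{1}{N}W^N(x_1,\dots,x_N)$ is, up to a controlled error, a continuous bounded-below affine-plus-quadratic functional of the empirical measure $T^N(x)=\frac1N\sum\delta_{x_i}$: indeed $\frac1N\sum_i V(x_i)=\int_\Omega V\,d(T^N x)$ exactly, and $\frac{1}{2N^2}\sum_{i\ne j}H(x_i-x_j)=\frac12\iint H\,d(T^N x)\otimes d(T^N x)-\frac{1}{2N}\int H(0)\,d(T^N x)$, so that $\frac1N\int_{\Omega^N}W^N\,d\mu^N = \int_{\P(\Omega)}\big(\int V\,d\rho+\frac12\iint H\,d\rho\,d\rho\big)\,d\hat\mu^N(\rho) + O(1/N)$, the error coming from replacing $\frac{1}{2N}\sum_{i\ne j}$ by $\frac{1}{2N}\sum_{i,j}$ which costs a term of size $\sup|H \text{ near }0|/N$ (using that $H$ is bounded below and, being $\lambda$-convex hence locally Lipschitz, bounded near $0$). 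Since $\hat\mu^N\rightharpoonup X$ narrowly in $\P(\P(\Omega))$ by Lemma~\ref{lem:characterizations}(iii), and the map $\rho\mapsto \int V\,d\rho+\frac12\iint H\,d\rho\,d\rho$ is lower semicontinuous and bounded below on $\P(\Omega)$ (lower semicontinuity of $\rho\mapsto\int V\,d\rho$ under narrow convergence for $V$ bounded below lsc, similarly for the $H$ double integral), the lower bound for the potential part follows from the portmanteau/Fatou-type lemma for narrowly convergent measures on a Polish space. The doubling hypothesis \eqref{eq:doublinghyp} together with the finite second-moment/energy bound is what keeps these integrals finite and the truncation errors uniformly controlled.

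For the entropy term, the standard tool is subadditivity of relative entropy over marginals. Using Lemma~\ref{lem:DiaconisFreedman} one passes between the true $n$-marginal $\mu^N_n$ and $(\hat\mu^N)^n$; but more directly, the Robinson–Ruelle / Gibbs variational principle gives, for any fixed $n<N$,
\begin{equation*}
\frac1N \int_{\Omega^N}\mu^N\log\mu^N\,dx \;\ge\; \frac{1}{n}\int_{\Omega^n}\mu^N_n\log\mu^N_n\,dx \;-\; \text{(boundary correction)},
\end{equation*}
exploiting that the normalized entropy is superadditive in the number of variables for symmetric measures. Then, letting $N\to\infty$ first with $n$ fixed (so $\mu^N_n\to X^n=\int\rho^{\otimes n}\,dX$ narrowly by Lemma~\ref{lem:characterizations}(ii)) and using lower semicontinuity of entropy along narrow convergence, one gets $\liminf_N \frac1N\int\mu^N\log\mu^N \ge \frac1n\int_{\Omega^n}X^n\log X^n\,dx$; finally sending $n\to\infty$ and using that $\frac1n\int_{\Omega^n}(\int\rho^{\otimes n}dX)\log(\cdots)\to\int_{\P(\Omega)}(\int_\Omega\rho\log\rho)\,dX(\rho)$ — this last superadditivity-in-$n$ limit being exactly the computation in Rougerie's notes — produces the entropic part of $\mathcal{F}^\infty$. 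Combining the two liminf bounds gives $\liminf_N\frac1N\mathcal{F}^N[\mu^N]\ge\mathcal{F}^\infty(X)$.

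For the upper bound (the recovery sequence / $\limsup$ part), one takes the explicit sequence $Y^N=\int_{\P(\Omega)}\rho^{\otimes N}\,dY(\rho)$. Here the entropy factorizes neatly on tensor powers: $\int_{\Omega^N}\rho^{\otimes N}\log\rho^{\otimes N}=N\int_\Omega\rho\log\rho$ for each $\rho$, and by convexity of $s\mapsto s\log s$ (Jensen in the mixing variable) the entropy of $Y^N$ is bounded above by $\int_{\P(\Omega)}N\int_\Omega\rho\log\rho\,dY(\rho)$; a matching lower bound comes again from Lemma~\ref{lem:DiaconisFreedman}/superadditivity, so in the limit $\frac1N\int Y^N\log Y^N\to\int_{\P(\Omega)}\int_\Omega\rho\log\rho\,dY(\rho)$. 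For the potential term, $\frac1N\int_{\Omega^N}W^N\,dY^N=\int_{\P(\Omega)}\big(\int V\,d\rho+\frac{N-1}{2N}\iint H\,d\rho\,d\rho\big)\,dY(\rho)\to\int_{\P(\Omega)}\big(\int V\,d\rho+\frac12\iint H\,d\rho\,d\rho\big)\,dY(\rho)$ by direct computation using exchangeability of $\rho^{\otimes N}$, with convergence justified by dominated convergence and the energy bound. Adding the two gives $\lim_N\frac1N\mathcal{F}^N[Y^N]=\mathcal{F}^\infty(Y)$.

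I expect the main obstacle to be the entropy lower bound: making the superadditivity/subadditivity of normalized entropy over marginals rigorous on an unbounded domain $\Omega$ (where $\mu\log\mu$ can be negative and the relative-entropy vs.\ Boltzmann-entropy bookkeeping matters), controlling the boundary corrections uniformly in $N$, and verifying that the double limit $N\to\infty$ then $n\to\infty$ can be taken in that order without losing mass or lower semicontinuity. The potential terms and the $\limsup$ construction are comparatively routine once the doubling condition and the uniform energy bound \eqref{eq:bounded energy} are in hand; the subtlety there is only the $H(0)$-type self-interaction term, which is $O(1/N)$ precisely because $\lambda$-convexity forces $H$ to be locally bounded.
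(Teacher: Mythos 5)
Your plan matches the paper's proof in all essentials: Jensen's inequality plus the exact tensor--product computation of the potential term for the recovery sequence, and subadditivity of the entropy over marginals combined with the Robinson--Ruelle limit $\sup_n\frac1n\int_{\Omega^n} X^n\log X^n=\int_{\P(\Omega)}\bigl(\int_\Omega\rho\log\rho\bigr)\,dX(\rho)$ for the liminf, with the leftover marginal controlled (exactly the obstacle you flag) by a Carleman-type lower bound on $\int_\Omega\mu^N_1\log\mu^N_1$ coming from the uniform second moment. The only local difference is the potential term in the lower bound: you pass through the empirical measure $\hat\mu^N$ with an $O(1/N)$ diagonal correction $H(0)/2N$, whereas the paper works directly with the two-particle marginal $\mu^N_2\rightharpoonup X^2$ and lower semicontinuity of $\frac12\iint\bigl(H(x-y)+V(x)+V(y)\bigr)\,d\mu^N_2$; both routes rest on the same Lemma~\ref{lem:characterizations} and are equally valid.
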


We split the proof of this result into two parts, the existence of the recovery sequence and the lower-semicontinuity of the sequence of functionals.

\begin{proposition}[recovery sequence]
Given $Y\in \P(\P(\Omega))$, let
\begin{equation*}
\nu^N= \int_{\P(\Omega)} \rho^{\otimes N}\;dY(\rho).
\end{equation*}
Then,
\begin{equation*}
\lim_{N\to \infty} \frac{\mathcal{F}^N[\nu^N]}{N}= \mathcal{F}^\infty[Y].
\end{equation*}
\end{proposition}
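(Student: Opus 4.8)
The plan is to compute each of the three terms in $\mathcal{F}^N[\nu^N]/N$ separately, using the tensor structure $\nu^N=\int_{\P(\Omega)}\rho^{\otimes N}\,dY(\rho)$, and show that each converges to the corresponding term in $\mathcal{F}^\infty[Y]=\int_{\P(\Omega)}\mathcal{F}^{MF}[\rho]\,dY(\rho)$. The potential energy term is the easiest: by definition of $\nu^N$ and Fubini, $\frac1N\int_{\Omega^N}W^N\,d\nu^N = \int_{\P(\Omega)}\left(\int_{\Omega}V\,d\rho + \frac{N-1}{2N}\int_{\Omega}\int_{\Omega}H(x-y)\,d\rho(x)d\rho(y)\right)dY(\rho)$, because $\frac1N\sum_i V(x_i)$ integrated against $\rho^{\otimes N}$ gives $\int V\,d\rho$, and $\frac{1}{2N^2}\sum_{i\ne j}H(x_i-x_j)$ integrated against $\rho^{\otimes N}$ gives $\frac{N(N-1)}{2N^2}\iint H\,d\rho\,d\rho$. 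As $N\to\infty$ the factor $\frac{N-1}{2N}\to\frac12$, and I would pass to the limit inside the outer integral using dominated/monotone convergence once the potentials are controlled — here the lower bounds on $V$ and $H$ together with the doubling condition and the finite-moment assumptions on $Y$ give the needed domination; if $\mathcal{F}^\infty[Y]=+\infty$ the liminf is trivially $+\infty$ as well, so one may assume finiteness.

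The entropy term requires the tensorization identity for relative entropy. Writing $\mathcal{H}[\mu]=\int\mu\log\mu$, I would use subadditivity/superadditivity: for the product measure $\rho^{\otimes N}$ one has $\mathcal{H}[\rho^{\otimes N}]=N\,\mathcal{H}[\rho]$ exactly, but $\nu^N$ is a \emph{mixture} of such products, so only $\mathcal{H}[\nu^N]\le \int_{\P(\Omega)} N\,\mathcal{H}[\rho]\,dY(\rho)$ holds trivially by convexity of $\mathcal{H}$ (giving the correct upper bound after dividing by $N$), while the matching lower bound $\frac1N\mathcal{H}[\nu^N]\ge \int \mathcal{H}[\rho]\,dY(\rho) - o(1)$ is the content of the superadditivity of entropy for marginals, which in Rougerie's treatment follows from the Diaconis--Freedman-type comparison between $\nu^N_n$ and $(\widehat{\nu^N})^n$ together with the lower semicontinuity of entropy. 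Concretely, I would bound $\frac1N\mathcal{H}[\nu^N]\ge \frac1n\mathcal{H}[\nu^N_n]$ (superadditivity of entropy over blocks of size $n$, using $n\mid N$ along a subsequence or the integer-part trick as in the proof of Lemma~\ref{lem:characterizations}), then let $N\to\infty$ with $n$ fixed, identify $\nu^N_n\to\int\rho^{\otimes n}\,dY(\rho)$ and use lower semicontinuity of $\mathcal{H}$, then let $n\to\infty$ and use $\frac1n\mathcal{H}[\int\rho^{\otimes n}\,dY]\to\int\mathcal{H}[\rho]\,dY(\rho)$.

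Combining the two bounds gives $\lim_{N\to\infty}\mathcal{F}^N[\nu^N]/N=\mathcal{F}^\infty[Y]$; note we get an actual limit, not merely a liminf, which is exactly what a recovery sequence demands (and indeed $\nu^N\to Y$ in the sense of Definition~\ref{def:convergence1} by Lemma~\ref{lem:characterizations}(ii), since $\nu^N_n=\int\rho^{\otimes n}\,dY=Y^n$ exactly, so $d_2(\nu^N_n,Y^n)=0$). The main obstacle is the entropy term: the potential energy is a soft Fubini computation, but the sharp two-sided control of $\frac1N\mathcal{H}[\nu^N]$ — in particular the lower bound via superadditivity of marginal entropies and the limit $\frac1n\mathcal{H}[\int\rho^{\otimes n}\,dY]\to\int\mathcal{H}[\rho]\,dY$ — is where the real work lies, and is precisely the place where one invokes the quantitative de Finetti / Diaconis--Freedman machinery set up earlier. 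If the clean superadditivity estimate is not desired, an alternative is to argue directly that $\mathcal{H}[\nu^N]-\int N\mathcal{H}[\rho]\,dY(\rho)$ is controlled by a mutual-information/chaos defect that vanishes after division by $N$, but the marginal-entropy route is cleaner and reuses the lemmas already proved.
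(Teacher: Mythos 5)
Your proposal is correct and follows essentially the same route as the paper: the potential term is the same exact Fubini computation with the $\frac{N-1}{2N}$ factor, the entropy upper bound is the same Jensen/convexity argument using $\mathcal{H}[\rho^{\otimes N}]=N\mathcal{H}[\rho]$, and your lower bound via superadditivity of marginal entropies, lower semicontinuity, and the limit $\frac1n\mathcal{H}[Y^n]\to\int\mathcal{H}[\rho]\,dY(\rho)$ is precisely the content of the lower-semicontinuity Proposition that the paper invokes at this point (noting, as you do, that $\nu^N_n=Y^n$ so $\nu^N\to Y$ trivially in the sense of Definition~\ref{def:convergence1}).
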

\begin{proof}
By convexity of the function $x \log(x)$ and Jensen's inequality, we have
\begin{align}\label{aux1}
\frac1N \int_{\Omega^N}  \nu^N \log\nu^N\;dx & \le \frac1N \int_{\P(\Omega)} \left(\int_{\Omega^N} \rho^{\otimes N} \log(\rho^{\otimes N})\;dx \right)dY(\rho)\nonumber\\&= \int_{\P(\Omega)} \int_{\Omega} \rho \log(\rho)\;dx \,dY(\rho).
\end{align}
Moreover, we have
\begin{equation}\label{aux3}
 \frac{1}{N}\int_{\Omega^N} W^N d\nu^N= \int_{\P(\Omega)}\!\left(\int_{\Omega} V(x)\; d\rho(x)+\frac{N-1}{2N}\int_{\Omega\times \Omega}\!\!\!\!\! H(x-y)\;d\rho(x)d\rho(y)\right)\,dY(\rho). 
\end{equation}
Therefore, collecting terms in \eqref{aux1} and \eqref{aux3} and taking the limit, we obtain that
\begin{equation*}
\limsup_{N\to \infty} \frac{\mathcal{F}^N[\nu^N]}{N}\le \mathcal{F}^\infty[Y].
\end{equation*}
The fact that the limit converges follows from the lower semicontinuity property,~Proposition~\ref{prop:lsc}.
\end{proof}

\begin{proposition}[lower semicontinuity]\label{prop:lsc}
Given a sequence $\{\mu^N\}_{N=1}^\infty$ such that $\mu^N\in \P_{sym}(\Omega^N)$ and $X\in \mathcal{P}(\mathcal{P}(\Omega^N))$ such that $\mu^N\to X$ in the sense of Lemma~\ref{lem:HaurayMischler}, then
\begin{equation*}
\liminf_{N\to \infty}\frac{\mathcal{F}^N[\mu^N]}{N}\ge \mathcal{F}^\infty[X]= \int_{\mathcal{P}(\Omega)} \mathcal{F}^{MF}[\rho]\;dX(\rho),
\end{equation*}
where
\begin{equation*}
\mathcal{F}^{MF}[\rho]=\int_{\Omega} V(x)\;\rho(x)\;dx+\frac{1}{2}\int_{\Omega}\int_{\Omega} H(x-y)\;\rho(x) \rho(y)\;dx\;dy+\int_{\Omega}\rho(x)\log(\rho(x))\;dx.
\end{equation*}
\end{proposition}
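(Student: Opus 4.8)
The plan is to establish the lower bound separately for the interaction/confinement part and for the entropy part of $\mathcal{F}^N$, and then add the two estimates. For the potential part, exchangeability of $\mu^N$ gives $\frac1N\int_{\Omega^N}W^N\,d\mu^N=\int_\Omega V\,d\mu^N_1+\frac{N-1}{2N}\int_{\Omega\times\Omega}H(x-y)\,d\mu^N_2$. By Lemma~\ref{lem:characterizations} the marginals converge, $\mu^N_1\to X^1$ and $\mu^N_2\to X^2$ in $d_2$, hence narrowly; since $V$ and $H$ are lower semicontinuous and bounded below, a truncation argument ($g_k=\min(g,k)$ followed by monotone convergence, which also absorbs the harmless factor $\frac{N-1}{2N}\to\frac12$) yields $\liminf_N\int_\Omega V\,d\mu^N_1\ge\int_\Omega V\,dX^1$ and $\liminf_N\frac{N-1}{2N}\int H(x-y)\,d\mu^N_2\ge\frac12\int H(x-y)\,dX^2$. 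By Fubini these limits equal $\int_{\P(\Omega)}\int_\Omega V\,d\rho\,dX(\rho)$ and $\frac12\int_{\P(\Omega)}\iint H(x-y)\,d\rho(x)\,d\rho(y)\,dX(\rho)$, i.e.\ the two potential terms of $\int_{\P(\Omega)}\mathcal{F}^{MF}[\rho]\,dX(\rho)$.

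For the entropy the key structural fact is the super-additivity of $\nu\mapsto\int\nu\log\nu$ under marginal splittings: for a density $f$ on $\Omega^k\times\Omega^l$ with marginals $f_1,f_2$ one has $\int f\log f-\int f_1\log f_1-\int f_2\log f_2=\int f\log\frac{f}{f_1\otimes f_2}\ge0$. Writing $N=q_n n+r_n$ with $0\le r_n<n$ and using the symmetry of $\mu^N$ to identify every size-$n$ (resp.\ size-$r_n$) block marginal with $\mu^N_n$ (resp.\ $\mu^N_{r_n}$), we get $\frac1N\int_{\Omega^N}\mu^N\log\mu^N\ge\frac{q_n}{N}\int_{\Omega^n}\mu^N_n\log\mu^N_n+\frac1N\int_{\Omega^{r_n}}\mu^N_{r_n}\log\mu^N_{r_n}$. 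By comparison with a Gaussian and the uniform-in-$N$ bound on $\int_\Omega|x|^2\,d\mu^N_1$ (which follows from the $d_2$-convergence of the first marginal), the entropies $\int_{\Omega^n}\mu^N_n\log\mu^N_n$ are bounded below uniformly in $N$ for fixed $n$, so the last term is $O(1/N)$ and the prefactor $\frac{q_n}{N}\to\frac1n$ may be replaced by $\frac1n$ in the limit. Since $\mu^N_n\to X^n$ in $d_2$ and $\nu\mapsto\int\nu\log\nu$ is lower semicontinuous under narrow convergence, we conclude, for every $n\in\N$, that $\liminf_N\frac1N\int_{\Omega^N}\mu^N\log\mu^N\ge\frac1n\int_{\Omega^n}X^n\log X^n$.

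It then remains to let $n\to\infty$ in this bound, i.e.\ to show $\lim_{n\to\infty}\frac1n\int_{\Omega^n}X^n\log X^n=\int_{\P(\Omega)}\int_\Omega\rho\log\rho\,dX(\rho)$ in $\R\cup\{+\infty\}$; the inequality $\le$ is immediate from Jensen applied to $X^n=\int\rho^{\otimes n}\,dX(\rho)$ (convexity of $x\log x$, exactly as in the recovery-sequence estimate), while the $\ge$ that is actually needed expresses the asymptotic decorrelation of tensor powers: as $n\to\infty$ the product measures $\rho^{\otimes n}$ occurring in $X^n$ concentrate, by the law of large numbers, on asymptotically disjoint sets, so that the entropy of the mixture $X^n$ splits into the $X$-average of the individual entropies up to a sublinear correction. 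One checks this first for finitely supported $X=\sum_k p_k\delta_{\rho_k}$, where $\frac1n\int X^n\log X^n=\frac1n\sum_k p_k\log p_k+\sum_k p_k\int_\Omega\rho_k\log\rho_k+o(1)$, and then extends it to general $X$ by approximation, the super-additivity of $n\mapsto\int X^n\log X^n$ (Fekete's lemma, so that $\frac1n\int X^n\log X^n$ converges to its supremum) making the limit stable. Adding the three contributions gives $\liminf_N\frac1N\mathcal{F}^N[\mu^N]\ge\int_{\P(\Omega)}\mathcal{F}^{MF}[\rho]\,dX(\rho)=\mathcal{F}^\infty[X]$.

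\emph{Main obstacle.} Everything except the inequality $\lim_n\frac1n\int X^n\log X^n\ge\int_{\P(\Omega)}\int_\Omega\rho\log\rho\,dX(\rho)$ is routine bookkeeping (symmetry, the standard lower semicontinuity of the entropy and of bounded-below lower semicontinuous integrands, Gaussian comparison for second moments, Lemma~\ref{lem:characterizations}). That inequality — an ``entropy-chaos'' / de Finetti-type decorrelation statement for $\rho^{\otimes n}$ — is where the genuine probabilistic content of the proposition lies, and is the step I expect to require real work (or a careful appeal to the literature).
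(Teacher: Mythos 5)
Your decomposition (potential part via convergence of the first two marginals, entropy part via subadditivity down to $\frac1n\int_{\Omega^n}\mu^N_n\log\mu^N_n$, Carleman/Gaussian control of the remainder block, then $n\to\infty$) is exactly the paper's strategy, and all of those steps are carried out correctly. The one step you do not prove is precisely the one the paper spends most of its proof on, namely
\begin{equation*}
\sup_{n}\frac{1}{n}\int_{\Omega^n}X^n\log X^n\;\ge\;\int_{\P(\Omega)}\Bigl(\int_\Omega\rho\log\rho\Bigr)\,dX(\rho),
\end{equation*}
and your sketch of it has a genuine gap in the approximation step. For finitely supported $X=\sum_k p_k\delta_{\rho_k}$ you do not need any law-of-large-numbers decorrelation: the elementary bounds $\sum_j p_jf_j\ge p_kf_k$ and Jensen give $\bigl|\frac1n\int X^n\log X^n-\sum_k p_k\int\rho_k\log\rho_k\bigr|\le\frac1n\sum_k p_k|\log p_k|$, which is all that is required. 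The real problem is ``extends it to general $X$ by approximation'': if $X_k\to X$ and you only know the discrete case for each $X_k$, the functional $\mathcal{E}(X):=\lim_n\frac1n\int X^n\log X^n$ is merely \emph{lower} semicontinuous (it is a supremum of lsc functionals), so a generic approximation gives $\liminf_k\mathcal{E}(X_k)\ge\mathcal{E}(X)$ --- the wrong direction; you need $\mathcal{E}(X)\ge\mathcal{E}(X_k)$. Fekete/superadditivity in $n$ does not repair this, since the issue is continuity in $X$, not in $n$.

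The paper closes this by a specific construction: it first proves that $\mathcal{E}$ is \emph{affine} on $\P(\P(\Omega))$ (both inequalities in \eqref{eq:linearity}, again by elementary logarithm manipulations), then covers $\P(\Omega)$ by balls $\omega_i$ of radius $1/k$, sets $\rho_i=\frac{1}{X(\omega_i)}\int_{\omega_i}\rho\,dX(\rho)$ and $X_k=\sum_iX(\omega_i)\delta_{\rho_i}$, and uses affinity together with Jensen's inequality on each conditional measure $Z_i=\frac{1}{X(\omega_i)}X\measurerestr\omega_i$ to get the monotonicity $\mathcal{E}(X)=\sum_iX(\omega_i)\mathcal{E}(Z_i)\ge\sum_iX(\omega_i)\int\rho_i\log\rho_i=\mathcal{E}(X_k)$ in the correct direction; the conclusion then follows from $\mathfrak{D}_2(X,X_k)\le 2/k$ and lower semicontinuity of the \emph{target} functional $X\mapsto\int_{\P(\Omega)}(\int\rho\log\rho)\,dX(\rho)$. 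You correctly flagged this as the crux, but as written your argument for it would not go through without supplying this (or an equivalent, e.g.\ the Robinson--Ruelle / Hauray--Mischler argument the paper cites).
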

\begin{proof}
Without loss of generality, up to subsequence which we do not relabel, we can assume that 
\begin{equation*}
\liminf_{N\to \infty}\frac{\mathcal{F}^N[\mu^N]}{N}=\lim_{N\to \infty}\frac{\mathcal{F}^N[\mu^N]}{N}\quad\mbox{and}\quad \sup_{N}\frac{\mathcal{F}^N[\mu^N]}{N}<\infty.
\end{equation*}
In particular, by lower semicontinuity with respect to weak convergence we have
\begin{align*}
\displaystyle\liminf_{N\to \infty}\frac{1}{N}\int_{\Omega^N}W^N\;d\mu^N(x)
&=\liminf_{N\to \infty}\frac{1}{2}\int_{\Omega\times\Omega} H(x-y)+V(x)+V(y)\; d\mu^N_2(x,y)\\
&\ge \frac{1}{2}\int_{\Omega\times\Omega} H(x-y)+V(x)+V(y)\; d\mu_2(x,y)\\
&=\int_{\mathcal{P}(\Omega)}\!\! \left(\frac{1}{2}\int_{\Omega\times\Omega} \!\!\! H(x-y)\; d\rho(x)d\rho(y)+\int_{\Omega}V(x)\;d\rho(x)\right)dX(\rho),
\end{align*}
which shows the desired inequality for the interaction and confinement term.

For the entropy term, we need to use the subadditivity property of the entropy~\cite{Lieb1975}. Let us consider the marginal $\mu^N_n$ as in \eqref{def:marginal}, that is integrating in the last $N-n$ variables, and we write
\begin{equation*}
\int_{\Omega^N} \mu^N \log(\mu^N)=\int_{\Omega^N} \mu^N \log\left(\mu^N_n\frac{\mu^N}{\mu^N_n}\right)=\int_{\Omega^N} \mu^N \left(\log\left(\mu^N_n\right)+ \log\left(\frac{\mu^N}{\mu^N_n}\right)\right)\,.
\end{equation*}
Integrating out the last $N-n$ variables in the first term, we obtain
\begin{equation*}
\int_{\Omega^N} \mu^N \log\left(\mu^N_n\right)=\int_{\Omega^n} \mu^N_n\log\left(\mu^N_n\right).
\end{equation*}
For the second term, we decompose it and apply Jensen's with respect to the probability measure $\mu^N_n$ to infer 
\begin{equation*}
\int_{\Omega^{N-n}}\left(\int_{\Omega^n}\mu^N \log\left(\frac{\mu^N}{\mu^N_n}\right)  \right)\ge \int_{\Omega^{N-n}}\left(\int_{\Omega^n}\mu^N\right)\log\left(\int_{\Omega^n}\mu^N\right)=\int_{\Omega^{N-n}}\mu^N_{N-n}\log\left(\mu^N_{N-n}\right).
\end{equation*}
where the symmetry of $\mu^N$ was used. Iterating this procedure and taking again into account the symmetry of $\mu^N$, we obtain the inequality
\begin{equation*}
\int_{\Omega^N} \mu^N \log(\mu^N)\ge \left\lfloor\frac{N}{n}\right\rfloor\int_{\Omega^n} \mu^N_n\log\left(\mu^N_n\right)+\int_{\Omega^{N-\left\lfloor\frac{N}{n}\right\rfloor n}}\mu^N_{N-\left\lfloor\frac{N}{n}\right\rfloor n}\log\left(\mu^N_{N-\left\lfloor\frac{N}{n}\right\rfloor n}\right).
\end{equation*}
Using the same procedure, with the first marginal, we obtain the inequality
\begin{equation*}
\int_{\Omega^{N-\left\lfloor\frac{N}{n}\right\rfloor n}}\mu^N_{N-\left\lfloor\frac{N}{n}\right\rfloor n}\log\left(\mu^N_{N-\left\lfloor\frac{N}{n}\right\rfloor n}\right)\ge \left(N-\left\lfloor\frac{N}{n}\right\rfloor n\right)\int_\Omega \mu^N_1 \log(\mu^N_1).
\end{equation*}
By the convergence $\mu^N\to X$ we know that $\lim_{N\to\infty}d_2^2(\mu^N_1,X^1)$, which implies the uniform bound
$$
\sup_{N\in \N}\int_{\Omega}|x_1|^2\;d\mu^N_1(x_1)\le C.
$$
By Carleman's inequality we have the uniform lower bound 
$$
\inf_{N\in \N}\int_\Omega \mu^N_1 \log(\mu^N_1)\ge -C.
$$
Hence, dividing by $N$, taking limits, using the lower semicontinuity of the entropy and the convergence $\mu^N\to X$, we obtain that for any $n\in \N$
\begin{equation*}
\liminf_{N\to \infty}\frac{1}{N}\int_{\Omega^N} \mu^N \log(\mu^N)\ge\frac{1}{n}\int_{\Omega^n} X^n \log(X^n).
\end{equation*}
Finally, to finish the proof we need to show the following property
\begin{equation}\label{eq:limitentropy}
\sup_{n\in \N}\frac{1}{n}\int_{\Omega^n} X^n \log(X^n)\ge \int_{\mathcal{P}(\Omega)}\left(\int_{\Omega}\rho\log(\rho)\right)\;dX(\rho).
\end{equation}
This was originally proven by Robinson and Ruelle in \cite{RobinsonRuelle}. The more modern proof that we present here can be found in \cite{hauray2014kac}. We first show that
\begin{equation}\label{eq:supandlim}
\sup_{n\in \N}\frac{1}{n}\int_{\Omega^n} X^n \log(X^n)=\lim_{n\to \infty}\frac{1}{n}\int_{\Omega^n} X^n \log(X^n).
\end{equation}
Given $\e>0$, there exists $j$ such that
\begin{equation*}
\frac{1}{j}\int_{\Omega^j} X^j \log(X^j)\ge \sup_{n\in \N}\frac{1}{n}\int_{\Omega^n} X^n \log(X^n)- \e.
\end{equation*}
Using the subadditivity of the entropy in the same procedure as before, we obtain that
\begin{align*}
\liminf_{k\to \infty}\frac{1}{k}\int_{\Omega^k} X^k \log(X^k) \ge \frac{1}{j}\int_{\Omega^j} X^j \log(X^j)\ge \sup_{n\in \N}\frac{1}{n}\int_{\Omega^n} X^n \log(X^n)- \e.
\end{align*}
Eqn.~\eqref{eq:supandlim} follows then by taking the limit $\e\to0^+$.

Let us now define the functional $\mathcal{E}:\P(\P(\Omega))\to \R\cup \{+\infty\}$ as
$$
\mathcal{E}(X):=\lim_{n\to\infty}\frac{1}{n}\int_{\Omega^n}X^n\log(X^n)\;dx 
$$
We notice that $\mathcal{E}(X)$ is linear over finite sums. Given $X,$ $Y\in\P(\P(\Omega))$, we have
$$
\begin{array}{c}
\ds\frac{1}{n}\int_{\Omega^n}\left(\frac{X^n+Y^n}{2}\right)\log\left(\frac{X^n+Y^n}{2}\right)\;dx\\
=\\
\ds\frac{1}{2n}\left(\int_{\Omega^n}X^n\log(X^n+Y^n)\;dx+\int_{\Omega^n}Y^n\log(X^n+Y^n)\;dx\right)-\frac{\log(2)}{n}\int_{\Omega^n}\left(\frac{X^n+Y^n}{2}\right)\\
\ge\\
\ds\frac{1}{2n}\left(\int_{\Omega^n}X^n\log(X^n)\;dx+\int_{\Omega^n}Y^n\log(Y^n)\;dx\right)-\frac{\log(2)}{n},
\end{array}
$$
where we have only used the standard properties of the logarithm. Taking the limit $n\to \infty$, we recover the inequality
$$
\mathcal{E}\left(\frac{X+Y}{2}\right)\ge \frac{1}{2}\mathcal{E}(X)+\frac{1}{2}\mathcal{E}(Y).
$$
The reverse inequality follows directly from convexity: for every $n\in\N$,
$$
\frac{1}{n}\int_{\Omega^n}\left(\frac{X^n+Y^n}{2}\right)\log\left(\frac{X^n+Y^n}{2}\right)\;dx\le \frac{1}{2n}\left(\int_{\Omega^n}X^n\log(X^n)\;dx+\int_{\Omega^n}Y^n\log(Y^n)\;dx\right).
$$
This implies
\begin{equation}\label{eq:linearity}
\mathcal{E}\left(\frac{X+Y}{2}\right)= \frac{1}{2}\mathcal{E}(X)+\frac{1}{2}\mathcal{E}(Y).    
\end{equation}
We notice that this readily implies that if we take a discrete measure $X_k= \sum_{i=1}^k\alpha_i \delta_{\rho_i}$, then
\begin{equation}\label{eq:discrete}
\mathcal{E}(X_k)=\sum_{i=1}^k\alpha_i\int_{\Omega}\rho_i(x)\log(\rho_i(x))\;dx=\int_{\P(\Omega)}\left(\int_{\Omega}\rho(x)\log(\rho(x))\;dx\right)\;dX_k(\rho).    
\end{equation}

We notice that, by Lemma~\ref{lem:characterizations}, $\mathcal{E}$ is the supremum of lower semicontinuous functionals with respect to the metric $\mathfrak{D}_2$ on $\P(\P(\Omega))$, therefore it is also lower semicontinuous. To conclude the proof, we find a sequence of discrete measures $\{X_k\}\subset \mathcal{P}(\mathcal{P}(\Omega))$ weakly converging to $X$ such that
$$
\mathcal{E}(X)\ge \mathcal{E}(X_k) \qquad\mbox{for every $k\in\N$.}
$$
From Theorem~\ref{thm:basicproperties} we know that $(P(P(\Omega),\mathfrak{D}_2)$ is a separable metric space. Hence, for any $\e>0$ we can cover $P(P(\Omega))$ with a countable number of balls of radius $1/k$ denoted by $\{B_i\}_{i=1}^\infty$. We pick $M\in \N$, such that
$$
\int_{\mathcal{P}_2(\Omega)\setminus \bigcup_{i=1}^{M-1}B_i}d^2_2(\rho,\delta_0)\;dX(\rho)<\frac{1}{k^2}.
$$
We define
$$
\omega_i=B_i\setminus \bigcup_{j=1}^{i-1}B_j,\qquad Z_i=\frac{1}{X(\omega_i)} X\measurerestr \omega_i,\qquad \rho_i=\int_{w_i}\rho\;dX(\rho),\qquad X_k=\sum_{i=1}^MX(\omega_i)\delta_{\rho_i}.
$$
Therefore, we get
$$
\mathcal{E}(X)=\sum_{i=1}^MX(\omega_i)\mathcal{E}(Z_i)\ge\sum_{i=1}^MX(\omega_i)\int_{\Omega}\rho_i(x)\log(\rho_i(x))\;dx=\mathcal{E}(X_k), 
$$
where we have used \eqref{eq:linearity}, Jensen's inequality and \eqref{eq:discrete}. The proof of \eqref{eq:limitentropy} follows by taking the limit when $k\to\infty$, noticing that by construction $\mathfrak{D}_2(X,X_k)\le 2/k$.
\end{proof}


\section{EVI Uniqueness and Proof of Theorem~\ref{main result}}\label{sec:uniquemain}

In this section we present the proof of the main result of this paper, Theorem~\ref{main result}. Let us first point out that we can define a unique gradient flow for evolutions in $\mathcal{P}(\mathcal{P}(\Omega))$.

\begin{lemma}\label{lem:uniquegradpp}
There exists a unique curve $X:[0,T]\to \mathcal{P}(\mathcal{P}(\Omega))$ satisfying
\begin{equation}\label{eq:EDI X}
\frac{e^{\lambda(t-s)}}{2}\mathfrak{D}_2^2(X(t),Y)-\frac{1}{2}\mathfrak{D}_2^2(X(s),Y)\le \left(\int_s^{t} e^{\lambda (r-s)}\;dr\right)(\mathcal{F}^{\infty}[Y]-\mathcal{F}^\infty[X(t)]),
\end{equation}
for any $0<s<t<\infty$ and
\begin{equation}\label{eq:initial X}
    X(0)=X_0=\lim_{N\to\infty} \mu_0^N.
\end{equation}
Moreover, it is explicitly given by
$$
X(t)=(S_t)_{\#}X_0,
$$
where $S_t:\P(\Omega)\to \P(\Omega)$ is the semigroup that is generated by the associated Fokker-Planck (McKean-Vlasov) equation 
\begin{equation*}
\partial_t \rho+\nabla\cdot((\nabla V +\nabla H *\rho) \rho)=\Delta \rho.
\end{equation*}
\end{lemma}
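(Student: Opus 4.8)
The plan is to reduce the statement to the classical gradient-flow theory for \eqref{mve} at the mean-field level and then lift it to $\P(\P(\Omega))$ by a superposition argument, whose upshot is that $(S_t)_\#X_0$ is automatically the EVI$_\lambda$ flow of $\mathcal{F}^\infty$. So I begin by collecting the base-level facts. Under the hypotheses of Theorem~\ref{main result} --- and this is exactly where the doubling condition \eqref{eq:doublinghyp} is used, to guarantee that the interaction energy $\rho\mapsto\frac12\iint H(x-y)\,d\rho(x)\,d\rho(y)$ is well defined, lower semicontinuous and behaves suitably at infinity on $\P_2(\Omega)$ --- the functional $\mathcal{F}^{MF}$ is proper, lower semicontinuous and geodesically convex on $(\P_2(\Omega),d_2)$, with modulus $\min(3\lambda,0)$ (the computation of Lemma~\ref{lem: convexity} carried out at the mean-field level, together with \cite[Propositions 9.3.2, 9.3.5, 9.3.9]{ambrosio2008gradient}). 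Hence \cite{ambrosio2008gradient} provides a unique EVI$_{\min(3\lambda,0)}$, and a fortiori EVI$_\lambda$ (since $\lambda\ge\min(3\lambda,0)$), gradient-flow semigroup $S_t$ for $\mathcal{F}^{MF}$; it is the solution operator of \eqref{mve}, is $d_2$-continuous and $e^{-\lambda t}$-Lipschitz, dissipates the energy, $\mathcal{F}^{MF}[S_t\rho]\le\mathcal{F}^{MF}[\rho]$, and, testing its EVI against a fixed $\bar\rho$ with $\mathcal{F}^{MF}[\bar\rho]<\infty$, it obeys $\mathcal{F}^{MF}[S_t\rho]\le\mathcal{F}^{MF}[\bar\rho]+\tfrac12\big(\int_0^t e^{\lambda r}\,dr\big)^{-1}d_2^2(\rho,\bar\rho)$ as well as a second-moment bound $d_2^2(S_t\rho,\delta_0)\le C(t)\,(1+d_2^2(\rho,\delta_0))$, locally uniformly in $t$.

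Next I would set $X(t):=(S_t)_\#X_0$ and record its elementary properties: integrating the second-moment bound against $X_0$ (which has finite second moment) gives $X(t)\in\P(\P(\Omega))$; the estimate $\mathfrak{D}_2^2(X(t),X(t'))\le\int d_2^2(S_t\rho,S_{t'}\rho)\,dX_0(\rho)$ obtained by pushing admissible couplings forward, combined with dominated convergence, shows that $t\mapsto X(t)$ is continuous and that $X(0)=X_0$, i.e.\ \eqref{eq:initial X} holds; and integrating the two displayed bounds above against $X_0$ gives $\mathcal{F}^\infty[X(t)]<\infty$ for every $t>0$, with $r\mapsto\mathcal{F}^\infty[X(r)]$ non-increasing.

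To verify \eqref{eq:EDI X}, fix $Y\in\P(\P(\Omega))$ --- we may assume $\mathcal{F}^\infty[Y]<\infty$, otherwise the right-hand side is $+\infty$ --- fix $0<s<t$, and let $\Pi_s$ be a $\mathfrak{D}_2$-optimal coupling of $X(s)$ and $Y$. The key point is to apply, for every pair $(\rho,\sigma)$, the base-level EVI of $S$ with flow time $t-s$,
\begin{equation*}
\frac{e^{\lambda(t-s)}}{2}\,d_2^2(S_{t-s}\rho,\sigma)-\tfrac12\,d_2^2(\rho,\sigma)\le\Big(\int_0^{t-s}e^{\lambda r}\,dr\Big)\big(\mathcal{F}^{MF}[\sigma]-\mathcal{F}^{MF}[S_{t-s}\rho]\big),
\end{equation*}
and integrate it against $\Pi_s$: since $((S_{t-s})\times\mathrm{id})_\#\Pi_s$ couples $X(t)=(S_{t-s})_\#X(s)$ with $Y$, one has $\mathfrak{D}_2^2(X(t),Y)\le\iint d_2^2(S_{t-s}\rho,\sigma)\,d\Pi_s$, while $\iint d_2^2(\rho,\sigma)\,d\Pi_s=\mathfrak{D}_2^2(X(s),Y)$ by optimality; the marginal identities give $\iint\mathcal{F}^{MF}[\sigma]\,d\Pi_s=\mathcal{F}^\infty[Y]$ and $\iint\mathcal{F}^{MF}[S_{t-s}\rho]\,d\Pi_s=\int\mathcal{F}^{MF}[S_t\rho]\,dX_0(\rho)=\mathcal{F}^\infty[X(t)]$ by the semigroup property; and $\int_0^{t-s}e^{\lambda r}\,dr=\int_s^t e^{\lambda(r-s)}\,dr$. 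Assembling these yields exactly \eqref{eq:EDI X}. The interchange is legitimate because $\rho\mapsto S_{t-s}\rho$ is Borel (being continuous), $\mathcal{F}^{MF}$ is lower semicontinuous and bounded below on the second-moment-bounded sets involved, and $\mathcal{F}^{MF}[\sigma]$ resp.\ $\mathcal{F}^{MF}[S_{t-s}\rho]$ are $Y$- resp.\ $X(s)$-integrable.

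Uniqueness is then immediate: EVI$_\lambda$ solutions on a complete metric space are uniquely determined by their initial datum \cite[Chap.~4]{ambrosio2008gradient} (see also \cite[Chap.~2]{MR3050280}), and $(\P(\P(\Omega)),\mathfrak{D}_2)$ is complete by Theorem~\ref{thm:basicproperties}; hence the curve built above is the unique one satisfying \eqref{eq:EDI X} and \eqref{eq:initial X}, which is the asserted representation $X(t)=(S_t)_\#X_0$. The real obstacle sits upstream, in the base-level well-posedness of \eqref{mve} --- that $\mathcal{F}^{MF}$ generates an EVI gradient flow on $(\P_2(\Omega),d_2)$, which is where the doubling hypothesis genuinely bites; granting $S_t$, the lift is soft, the only care being the measurability and integrability above and the (standard but essential) choice to couple at time $s$ rather than at time $0$, so that the integrated inequality closes.
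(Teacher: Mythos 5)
Your proof follows the paper's argument essentially verbatim: construct $X(t)=(S_t)_{\#}X_0$ from the base-level semigroup, push the optimal coupling of $X(s)$ and $Y$ forward by $S_{t-s}\times\mathrm{id}$, integrate the base-level EVI against it, and invoke standard EVI uniqueness on the complete metric space $(\mathcal{P}(\mathcal{P}(\Omega)),\mathfrak{D}_2)$ — you merely supply more detail on measurability, second moments and integrability than the paper does. One caveat: your step ``EVI$_{\min(3\lambda,0)}$, and a fortiori EVI$_{\lambda}$ since $\lambda\ge\min(3\lambda,0)$'' has the implication backwards (an EVI with a smaller modulus is the \emph{weaker} statement, so it does not imply the one with the larger modulus), although the paper itself glosses over the same $\lambda$ versus $3\lambda$ discrepancy by simply asserting $\lambda$-convexity of $\mathcal{F}^{MF}$.
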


\begin{proof}

We differentiate
\begin{equation*}
\frac{e^{\lambda(t-s)}}{2}\mathfrak{D}_2^2(X(t),Y)-\frac{1}{2}\mathfrak{D}_2^2(X(s),Y)\le \left(\int_s^{t} e^{\lambda (r-s)}\;dr\right)(\mathcal{F}^{\infty}[Y]-\mathcal{F}^\infty[X(t)]).
\end{equation*}
to obtain the classical Evolutionary Variational Inequality which characterizes the gradient flows in metric spaces \cite{ambrosio2008gradient}. Uniqueness follows from using the doubling variables trick of Crandall-Liggett \cite[Chapter 4]{ambrosio2008gradient}.

We consider the Fokker-Planck semigroup $S_t:\P(\Omega)\to \P(\Omega)$ 
induced by the equation
\begin{equation*}
\begin{cases}
\partial_t \rho+\nabla\cdot((\nabla V +\nabla H *\rho) \rho)=\Delta \rho, \quad x \in \Omega,\\
\nabla \rho\cdot \overrightarrow{n}=0, \quad x \in \partial \Omega.
\end{cases}
\end{equation*}
Using $S_t$ and given $X_0\in \P(\P(\Omega))$, we can define the curve
\begin{equation*}
X_t=(S_t)_{\#}X_0 \,.
\end{equation*}
We claim that $X_t$ also satisfies the integral Evolutionary Variational Inequality. By using the $\lambda$ convexity of $\mathcal{F}^{MF}$ on the generalized geodesics, we have that for any $0<s<t<\infty$ and $\rho_1,$ $\rho_2\in \P(\Omega)\cap D\left(\mathcal{F}^{MF}\right)$ the inequality
\begin{equation}\label{eq:EVIrho}
\frac{e^{\lambda(t-s)}}{2}d_2^2(S^{t-s}\rho_1,\rho_2)-\frac{1}{2}d_2^2(\rho_1,\rho_2)\le \left(\int_s^{t} e^{\lambda (r-s)}\;dr\right)(\mathcal{F}^{MF}[\rho_2]-\mathcal{F}^{MF}[S^{t-s}\rho_1])
\end{equation}
holds, see \cite{ambrosio2008gradient}.

We consider $\Pi\in \P(\P(\Omega)\times\P(\Omega))$, the optimal pairing between $X_s$ and $Y$. We notice that $(S^{t-s}\times I)_\#\Pi$ is a pairing between $X_t$ and $Y$. Therefore, we have the inequality
\begin{equation*}
\begin{array}{rl}
\ds\frac{e^{\lambda(t-s)}}{2}\mathfrak{D}_2^2(X_t,Y)-&\ds\frac{1}{2}\mathfrak{D}_2^2(X_s,Y)\\
&\ds\le\int_{\P(\Omega)\times \P(\Omega)}\frac{e^{\lambda(t-s)}}{2}d_2^2(S^{t-s}\rho_1,\rho_2)-\frac{1}{2}d_2^2(\rho_1,\rho_2)\;d\Pi(\rho_1,\rho_2)\\
&\ds\le \int_{\P(\Omega)\times \P(\Omega)}\left(\int_s^{t} e^{\lambda (r-s)}\;dr\right)(\mathcal{F}^{MF}[\rho_2]-\mathcal{F}^{MF}[S^{t-s}\rho_1])\;d\Pi(\rho_1,\rho_2)\\
&\ds=\left(\int_s^{t} e^{\lambda (r-s)}\;dr\right)(\mathcal{F}^{\infty}[Y]-\mathcal{F}^\infty[X_t]),
\end{array}
\end{equation*}
where we have used \eqref{eq:EVIrho}, the fact that $(S^{t-s}\times I)_\#\Pi$ is a pairing between $X_t$ and $Y$ and the definition of $\mathcal{F}^\infty$.
\end{proof}

We are now ready to prove the main result of this paper.

\begin{proof}[Proof of Theorem~\ref{main result}]
We first use Lemma \ref{lem: convexity} to show the convexity of $\frac{1}{N}\mathcal{F}^N$ along generalized geodesics in $\P_{sym}(\Omega^N)$ for all $N\in\N$. Next, we use arguments from the theory of gradient flows. The following result can be found for instance in \cite[Theorem 4.0.4, Theorem 11.2.1]{ambrosio2008gradient}, \cite[Theorem 4.20]{daneri2010lecture} or \cite{santambrogio2017euclidean,savare2007gradient}.

\begin{theorem}\label{thm:Ngradientflow1}
Given $\mu^N_0\in \P_{sym}(\Omega^N)\cap Domain(\mathcal{F}^N)$, then there exists $\mu^N:[0,\infty)\to \P(\Omega^N)$ the unique gradient flow of $\mathcal{F}^N$, such that 
$$
\lim_{t\to 0}d_2(\mu^N,\mu^N_0)=0.
$$
Moreover, it satisfies
\begin{itemize}
    \item $\mu^N(t)\in \P_{sym}(\Omega^N)$ for any $t\ge 0$.
    \item The Energy Disipation Equality (EDE)
\begin{equation}\label{eq:EDEthm1}
\mathcal{F}^N[\mu^N(t)]+\frac{1}{2}\int_0^t |\dot{\mu}^N(s)|^2\;ds+\frac{1}{2}\int_0^t |\partial \mathcal{F}^N[\mu^N(s)]|^2\;ds=\mathcal{F}^N[\mu_0^N],
\end{equation}
for any $t\ge0$.
\item The integral Energy Variational Inequality (EVI)
\begin{equation}\label{eq:integralEVIthm1}
\frac{e^{3\lambda(t-s)}}{2}d_2^2(\mu^N(t),\nu^N)-\frac{1}{2}d_2^2(\mu^N(s),\nu^N)\le \left(\int_s^{t} e^{3\lambda (r-s)}\;dr\right)(\mathcal{F}^N[\nu^N]-\mathcal{F}^N[\mu^N(t)])
\end{equation}
holds for any $0<s<t<\infty$ and $\nu^N\in \mbox{Domain}(\mathcal{F}^N)$.
\end{itemize}
\end{theorem}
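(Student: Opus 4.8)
The plan is to recognize that this statement is, up to the symmetry bookkeeping, a direct application of the abstract theory of gradient flows of $\lambda$-convex functionals on Wasserstein spaces, so the work consists of checking the hypotheses of that theory and then propagating exchangeability. First I would record the structural facts: $(\mathcal{P}_2(\Omega^N),d_2)$ is a complete separable metric space (Theorem~\ref{thm:basicproperties} with $S=\Omega^N$); the functional $\mathcal{F}^N$ is proper and lower semicontinuous with respect to narrow convergence (the potential part because $W^N$ is continuous and bounded below, the entropy part by the classical lower semicontinuity of relative entropy with respect to Lebesgue measure along narrowly convergent sequences); and its sublevel sets are $d_2$-relatively compact (the confinement built into $V$ together with the lower bounds on $V,H$ controls second moments, upgrading narrow precompactness to $d_2$-precompactness). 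Combined with Lemma~\ref{lem: convexity}, which supplies convexity of $\mathcal{F}^N$ along geodesics and generalized geodesics of $d_2$ with the stated constant, the abstract existence/uniqueness results quoted from \cite{ambrosio2008gradient} apply verbatim and yield a unique locally absolutely continuous curve $\mu^N$ with $\lim_{t\to0}d_2(\mu^N(t),\mu^N_0)=0$ which is a curve of maximal slope for $\mathcal{F}^N$; uniqueness comes precisely from the $\lambda$-convexity through the Crandall--Liggett doubling-variables argument. The Energy Dissipation Equality \eqref{eq:EDEthm1} is then the EDE characterization of curves of maximal slope for $\lambda$-convex functionals, and the integral EVI \eqref{eq:integralEVIthm1} is the Evolutionary Variational Inequality satisfied by the gradient flow, again from \cite{ambrosio2008gradient}.

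It then remains to prove the invariance $\mu^N(t)\in\mathcal{P}_{sym}(\Omega^N)$, and here I would use uniqueness as a symmetry principle. For any permutation $\sigma$ of $\{1,\dots,N\}$, the coordinate-permutation map $U_\sigma:\Omega^N\to\Omega^N$ induces a pushforward $(U_\sigma)_\#$ that is an isometry of $(\mathcal{P}_2(\Omega^N),d_2)$, and one checks $\mathcal{F}^N\circ(U_\sigma)_\#=\mathcal{F}^N$: the potential $W^N$ is invariant under $U_\sigma$ by the symmetry of the defining sums, and the entropy functional is invariant because $U_\sigma$ is a measure-preserving diffeomorphism of $\Omega^N$. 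Hence $t\mapsto(U_\sigma)_\#\mu^N(t)$ is again a gradient flow of $\mathcal{F}^N$, and it starts at $(U_\sigma)_\#\mu^N_0=\mu^N_0$ by the assumed symmetry of the initial datum; uniqueness forces $(U_\sigma)_\#\mu^N(t)=\mu^N(t)$ for all $t\ge0$ and all $\sigma$, which is exactly exchangeability of $\mu^N(t)$.

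The genuinely delicate points, and the ones I expect to require care rather than citation, are two. The first is the precompactness of sublevel sets of $\mathcal{F}^N$ in $d_2$ when $\Omega$ (hence $\Omega^N$) is unbounded: one must extract second-moment bounds on $\{\mathcal{F}^N\le c\}$ from the lower bounds on $V,H$ and the behavior at infinity, and also obtain a uniform lower bound on the entropy via a Carleman-type inequality so that the two parts of $\mathcal{F}^N$ cannot conspire to cancel. The second is making sure the geodesics along which Lemma~\ref{lem: convexity} asserts convexity are admissible, i.e. remain in $\Omega^N$; when $\Omega$ is not convex one works with generalized geodesics instead, which is precisely why the quoted statement from \cite{ambrosio2008gradient} is phrased for generalized geodesics. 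Once these verifications are in place there is no further obstacle, and all three listed properties follow from the metric gradient-flow machinery applied to the $\lambda$-convex, lower semicontinuous functional $\mathcal{F}^N$.
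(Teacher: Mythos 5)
Your proposal is correct and follows essentially the same route as the paper: the paper offers no proof of this theorem at all, simply citing \cite[Theorems 4.0.4 and 11.2.1]{ambrosio2008gradient} and related references after having established the $\min(3\lambda,0)$-convexity of $\mathcal{F}^N$ along (generalized) geodesics in Lemma~\ref{lem: convexity}. You go further than the paper on two points, both to your credit. First, you actually verify the hypotheses of the abstract theory (properness, lower semicontinuity, convexity); note only that your claim of $d_2$-relative compactness of sublevel sets is stronger than what is needed and can fail when $\lambda\le 0$ and $V$ is merely bounded below (e.g.\ $V\equiv 0$, $H\equiv 0$ leaves the pure entropy, whose sublevel sets on $\R^{dN}$ are not tight) --- the AGS existence theorem only requires the weaker coercivity condition $\inf_\nu\{\frac{1}{2\tau_*}d_2^2(\mu_*,\nu)+\mathcal{F}^N[\nu]\}>-\infty$, which does hold here by the boundedness below of $W^N$ and a Carleman-type bound on the entropy. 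Second, and more substantively, you supply the argument for the first bullet point, $\mu^N(t)\in\P_{sym}(\Omega^N)$, via the permutation-invariance of $\mathcal{F}^N$, the isometry property of $(U_\sigma)_\#$, and uniqueness of the EVI flow; this is not part of the cited abstract theorems and the paper leaves it implicit, so your uniqueness-as-symmetry-principle argument genuinely fills a gap in the exposition.
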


The next step in the proof is to make use of the EDE \eqref{eq:EDEthm1} to gain compactness of the curves $\{\mu^N\}_{N\in \N}$ by Lemma \ref{lem:compactness1}. Once we have a limiting evolution $X(t)$ in $\mathcal{P}(\mathcal{P}(\Omega))$, we need to pass to the limit in the EVI. We first notice the convergence of the metric given in Lemma \ref{lem:limitmetric} giving the convergence of the lefthand side of the EVI \eqref{eq:integralEVIthm1}. The convergence of the right-hand side of the EVI \eqref{eq:integralEVIthm1} is given by the $\Gamma$-convergence result proved in Lemma \ref{lem:gconv}. Therefore, by taking the limit in the EVI \eqref{eq:integralEVIthm1}, we have that the curve $X(\cdot)$ satisfies
\begin{equation*}
\frac{e^{\lambda(t-s)}}{2}\mathfrak{D}_2^2(X(t),Y)-\frac{1}{2}\mathfrak{D}_2^2(X(s),Y)\le \left(\int_s^{t} e^{\lambda (r-s)}\;dr\right)(\mathcal{F}^{\infty}[Y]-\mathcal{F}^\infty[X(t)]),
\end{equation*}
for any $0<s<t<\infty$ and
\begin{equation*}
    X(0)=X_0=\lim_{N\to\infty} \mu_0^N.
\end{equation*}
We finish the proof of our main result by using the uniqueness part of Lemma~\ref{lem:uniquegradpp}, identifying our limiting evolution as the gradient flow solution in $\mathcal{P}(\mathcal{P}(\Omega))$.
\end{proof}

 \bibliographystyle{abbrv}
 \bibliography{biblio}

\end{document}